\newtheorem{thm}[equation]{Theorem}
\numberwithin{equation}{section}
\newtheorem{cor}[equation]{Corollary}
\newtheorem{defin}[equation]{Definition}
\newtheorem{tabl}[equation]{Table}
\begin{document}
\setcounter{MaxMatrixCols}{37}
\def\dig{\begin{picture}(15,10)\put(-9,16){\line(3,-2){27}}\put(7.5,5){\circle*{6}}\end{picture}}
\def\digs{\begin{picture}(15,10)\put(-9,16){\line(3,-2){18}}\put(7.5,5){\circle*{6}}\end{picture}}

\def\mc{\multicolumn}
\def\ss{\smallskip}
\def\bs{\bigskip}
\def\ssum{\sum\limits}
\def\dsum{{\displaystyle{\sum}}}
\def\la{\langle}
\def\ra{\rangle}
\def\on{\operatorname}
\def\gcd{\on{gcd}}
\def\a{\alpha}
\def\bz{{\Bbb Z}}
\def\eps{\epsilon}
\def\br{{\bold R}}
\def\bc{{\bold C}}
\def\bN{{\bold N}}
\def\cm{{\bold m}}
\def\nut{\widetilde{\nu}}
\def\tfrac{\textstyle\frac}
\def\b{\beta}
\def\G{\Gamma}
\def\D{\Delta}
\def\g{\gamma}
\def\zt{{\bold Z}_2}
\def\zth{{\bold Z}_2^\wedge}
\def\bg{{\bold g}}
\def\bof{{\bold f}}
\def\bq{{\bold Q}}
\def\be{{\bold e}}
\def\xb{{\overline x}}
\def\xbar{{\overline x}}
\def\ybar{{\overline y}}
\def\zbar{{\overline z}}
\def\ebar{{\overline \be}}
\def\nbar{{\overline n}}
\def\rbar{{\overline r}}
\def\Ubar{{\overline U}}
\def\et{{\widetilde e}}
\def\rt{{\widetilde R}}
\def\ni{\noindent}
\def\ms{\medskip}
\def\ehat{{\hat e}}
\def\xhat{{\widehat x}}
\def\nbar{{\overline{n}}}
\def\minp{\min\nolimits'}
\def\N{{\Bbb N}}
\def\Z{{\Bbb Z}}
\def\S{{\Bbb S}}
\def\M{{\Bbb M}}
\def\C{{\Bbb C}}
\def\el{\ell}
\def\mo{\on{mod}}
\def\TCB{\overline{\on{TC}}}
\def\lcm{\on{lcm}}
\def\dstyle{\displaystyle}
\def\Remark{\noindent{\it  Remark}}
%\raggedbottom \voffset=-.7truein \hoffset=0truein \vsize=8truein
\title[Topological complexity of lens spaces and $ku$-homology]
{Topological complexity of 2-torsion lens spaces and $ku$-(co)homology}
\author{Donald M. Davis}
\address{Department of Mathematics, Lehigh University\\Bethlehem, PA 18015, USA}
\email{dmd1@lehigh.edu}
\date{February 10, 2015}
\keywords{Topological complexity, lens space, K-theory}
\thanks {2000 {\it Mathematics Subject Classification}:
55M30, 55N15.}

\maketitle
\begin{abstract} We use $ku$-cohomology to determine lower bounds for the topological complexity of mod-$2^e$ lens spaces.
In the process, we give an almost-complete determination of $ku_*(L^\infty(2^e))\otimes_{ku_*}ku_*(L^\infty(2^e))$,
proving a conjecture of Gonz\'alez about the annihilator ideal of the bottom class.
Our proof involves an elaborate row reduction of  presentation
matrices of arbitrary size.
\end{abstract}

\baselineskip=16pt
\section{Main Theorems}\label{sec1} The determination of the topological complexity of topological spaces has been much studied since
its introduction by Farber in \cite{Far}. The (normalized) topological complexity, $\TCB(X)$, of a space $X$ is 1 less than
the smallest number of open subsets of $X\times X$ over which the fibration $PX\to X\times X$, which sends a path $\sigma$ to $(\sigma(0),\sigma(1))$,
has a section. See \cite{Gon2} and \cite{GVW} for an expanded discussion of this concept, especially as it relates to lens spaces.

Let $L^{2n+1}(t)$ denote the standard $(2n+1)$-dimensional $t$-torsion lens space, and let $b(n,e)$, as defined in \cite{GVW},
denote the smallest integer $k$ such that there exists a map
\begin{equation}\label{Lmap}L^{2n+1}(2^e)\times L^{2n+1}(2^e)\to L^{2k+1}(2^e)\end{equation}
which when followed into $L^\infty(2^e)$ is homotopic to a restriction of the $H$-space multiplication of $L^\infty(2^e)=B\Z/2^e$.
In \cite{Gon2}, it is proved that
$$2b(n,e)\le\TCB(L^{2n+1}(2^e))\le 2b(n,e)+1.$$
 Thus
the following theorem yields a lower bound for $\TCB(L^{2n+1}(2^e))$.
Here and throughout $\a(n)$ denotes the number of 1's in the binary expansion of $n$.

\begin{thm}\label{bthm} If $e\ge2$ and $e\le\a(m)<2e$, then
$$b(m+2^{\a(m)-e}-1,e)\ge 2m-2^{\a(m)-e}.$$
\end{thm}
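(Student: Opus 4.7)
The plan is to obstruct the existence of a map (\ref{Lmap}) with small target by exhibiting a non-vanishing product in $ku$-cohomology. Write $n = m + 2^{\alpha(m)-e} - 1$ and $K = 2m - 2^{\alpha(m)-e}$; the goal is to prove $b(n,e) \ge K$, i.e., that no map $L^{2n+1}(2^e)\times L^{2n+1}(2^e)\to L^{2k+1}(2^e)$ with $k<K$ can restrict (after composing with the inclusion into $L^\infty(2^e)$) to the $H$-space multiplication.

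My first step would be to recast this in $ku^*$-terms. Since $L^\infty(2^e) = B\Z/2^e$ has $ku^*(L^\infty(2^e)) \cong ku^*[[x]]/([2^e](x))$, with $[2^e](x)$ the $2^e$-series of the multiplicative formal group law, and since $\mu^*(x) = x +_F y$, the existence of a map of the required form with target $L^{2k+1}(2^e)$ forces a suitable power of $x +_F y$ to vanish in $ku^*(L^{2n+1}(2^e)\times L^{2n+1}(2^e))$. Thus the theorem reduces to showing that the image of $(x+_F y)^{K}$ in this quotient is nonzero.

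Second, I would expand $(x+_F y)^K$ using the formal group law and isolate a single monomial $c_{i,j}\,x^iy^j$ with $i,j\le n$ whose coefficient must survive modulo the defining relations $([2^e](x),\,[2^e](y),\,x^{n+1},\,y^{n+1})$. The arithmetic hypothesis $e\le\alpha(m)<2e$ controls the binary expansion of $m$ so that a preferred splitting produces a pair $(i,j)$ both within the truncation range, while the Bott-weighted binomial coefficient appearing in $(x+_F y)^K$ has $2$-adic valuation low enough to escape the $2^e$-series relations.

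The main obstacle is Step 3: verifying the non-vanishing of this designated element. I would pass, via the pairing between $ku$-cohomology of the product and the tensor product $ku_*(L^\infty(2^e))\otimes_{ku_*}ku_*(L^\infty(2^e))$, to the assertion that a specific element lies outside the annihilator ideal of the bottom class. This is precisely Gonz\'alez's conjecture, proved elsewhere in the paper, and its verification requires the elaborate row reduction of presentation matrices of arbitrary size announced in the abstract. In effect, all of the structural ku-homology work carries the weight of this final step, and the combinatorial condition on $\alpha(m)$ is exactly what calibrates the monomial we must detect to the non-annihilating classes produced by that reduction.
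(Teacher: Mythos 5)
Your Step 1 is essentially the paper's starting point, though the paper uses the trick from Astey of precomposing with the self-map $(1,-1)$ to replace $(x+_F y)^K$ by the ordinary power $(x-y)^K$, whose binomial coefficients are easier to control. This is a minor technical simplification and not a conceptual gap.

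The real problems are in Steps 2 and 3. First, the proof does not isolate a single surviving monomial $c_{i,j}x^iy^j$. In the paper's argument (for the case $d=3\cdot2^t-2$), the element $2^{e+t-1}x^{d/2}y^{d/2}$ is fed through a chain of equivalences dictated by the relations in $G_d$, and what comes out at the end is a two-term sum $\pm 2^{e-t-1}(x^{3\cdot2^{t-1}-2}y^{3\cdot2^{t-1}-1}+x^{3\cdot2^{t-1}-1}y^{3\cdot2^{t-1}-2})$ together with contributions from the auxiliary polynomials $q$, which have to be shown to be killable. The non-vanishing is read off from the shape of the reduced presentation matrix, not from the survival of any one monomial coefficient.

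Second, and more seriously, you assert that the non-vanishing reduces to Gonz\'alez's annihilator conjecture (Theorem \ref{annthm}). It does not. That theorem only identifies the annihilator ideal of the bottom class $[0,0]$, i.e., it tells you exactly when $c\cdot u^d[0,0]=0$. But the element the proof must detect is $2^{e+t-1}x^{d/2}y^{d/2}+2^{e+t}f(x,y)$, which under the dictionary $u^kx^{n-i}y^{n-j}\leftrightarrow u^k[i,j]$ corresponds to a class in $G_d$ supported on generators $[i,j]$ with $i,j>0$ and $u$-exponent $0$, not on $u^d[0,0]$. Showing this is nonzero requires the much finer structural information in Theorems \ref{longthm} and \ref{genthm} (the full reduced presentation matrix), from which Theorem \ref{annthm} is itself a tiny corollary. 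So the logical dependency is reversed: the annihilator theorem is a byproduct of the matrix reduction, not the input that Theorem \ref{bthm} needs.

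Finally, your proposal omits an essential piece of the argument: the paper does not prove Theorem \ref{bthm} directly for all $m$ in the stated range. It proves only two families of special cases (equations (\ref{c1}) and (\ref{c2}), corresponding to $m=2^{\a(B)-e}(4B+1)$ and $m=2^{\a(B)-e}B$), and then derives the remaining cases by an induction on $t=\a(m)-e$ using the monotonicity $b(n+1,e)\ge b(n,e)$. Without this reduction, the $ku$-cohomology obstruction alone does not cover the whole parameter range in the theorem.
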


This immediately implies the following result for topological complexity, which might be considered our main result.
\begin{cor} If $e\ge2$ and $e\le\a(m)<2e$, then
$$\TCB(L^{2m+2^{\a(m)-e+1}-1}(2^e))\ge 4m-2^{\a(m)-e+1}.$$
\end{cor}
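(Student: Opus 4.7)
The corollary is essentially an immediate consequence of Theorem \ref{bthm} combined with the lower-bound inequality $2b(n,e)\le\TCB(L^{2n+1}(2^e))$ quoted from \cite{Gon2}, so my plan is just to make the substitution transparent.

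First, I would fix $e$ and $m$ satisfying the hypotheses $e\ge 2$ and $e\le\alpha(m)<2e$, and set
$$n:=m+2^{\alpha(m)-e}-1,$$
so that $2n+1=2m+2^{\alpha(m)-e+1}-1$. This is exactly the exponent of the lens space appearing in the statement of the corollary. Theorem \ref{bthm} applies to this choice and yields
$$b(n,e)=b(m+2^{\alpha(m)-e}-1,e)\ge 2m-2^{\alpha(m)-e}.$$

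Next, I would invoke the inequality $2b(n,e)\le\TCB(L^{2n+1}(2^e))$ recalled in the paragraph preceding Theorem \ref{bthm}. Multiplying the conclusion of the theorem by $2$ and combining with this inequality gives
$$\TCB\bigl(L^{2m+2^{\alpha(m)-e+1}-1}(2^e)\bigr)\ge 2b(n,e)\ge 4m-2^{\alpha(m)-e+1},$$
which is precisely the stated bound. There is no real obstacle here; the entire content of the corollary is the bookkeeping in the substitution $n\mapsto m+2^{\alpha(m)-e}-1$, together with the doubling $2\cdot 2^{\alpha(m)-e}=2^{\alpha(m)-e+1}$. All the mathematical difficulty is absorbed into the proof of Theorem \ref{bthm} itself.
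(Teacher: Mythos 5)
Your proof is correct and follows exactly the route the paper intends: the corollary is stated as "immediately implied" by Theorem \ref{bthm} together with the inequality $2b(n,e)\le\TCB(L^{2n+1}(2^e))$ quoted from \cite{Gon2}, and your substitution $n=m+2^{\a(m)-e}-1$ carries out precisely that bookkeeping. Nothing further is needed.
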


Other results follow from this and the obvious relation $b(n+1,e)\ge b(n,e)$.
The author believes that this result contains all lower bounds for $b(n,e)$ implied by 2-primary connective complex $K$-theory $ku$.
In \cite{GZ}, a much stronger conjectured lower bound for $b(n,e)$ is given, with the same flavor as our theorem.
Their conjecture depends on conjectures about $BP^*(L^{2n+1}(2^e)\times L^{2n+1}(2^e))$, while our theorem depends on a theorem about
$ku^*(L^{2n+1}(2^e)\times L^{2n+1}(2^e))$.

Our first new result for topological complexity is
$$\TCB(L^{2m+7}(2^{\a(m)-2}))\ge 4m-8\text{ if }\a(m)\ge4.$$

Our theorem is proved by applying $ku^*(-)$ to the map (\ref{Lmap}), obtaining a contradiction under appropriate choice of parameters.
Our main ingredient is the almost-complete determination of $ku^{4n-2d}(L^{2n}(2^e)\times L^{2n}(2^e))$.
It is well-known that $ku^*=\Z_{(2)}[u]$ with $|u|=-2$ and that its $2^e$-series satisfies
$$[2^e](x)=\sum_{i=1}^{2^e}\tbinom{2^e}iu^{i-1}x^i.$$
It is proved in \cite[Proposition 3.1]{Gon1} that
\begin{equation}\label{coh}ku^{\text{ev}}(L^{2n}(2^e)\times L^{2n}(2^e))=ku^*[x,y]/(x^{n+1},y^{n+1},[2^e](x),[2^e](y)),\end{equation}
where $|x|=|y|=2$. One of our main accomplishments is to give a more useful description of $ku^{4n-2d}(L^{2n}(2^e)\times L^{2n}(2^e))$.

On the other hand, $ku$-homology, $ku_*(L_{2^e})$, of
 the infinite-dimensional  lens space $L_{2^e}=L^\infty(2^e)$ is  the $ku_*$-module
generated by classes $z_i$, $i\ge0$, of grading $2i+1$ with relations
$$\sum_{\ell=0}^i\tbinom{2^e}{\ell+1}u^\ell z_{i-\ell},\quad i\ge0.$$ Here $|u|=2$ in $ku_*$.
Also, $ku_*(L_{2^e}\times L_{2^e})$ contains $ku_*(L_{2^e})\otimes_{ku_*}ku_*(L_{2^e})$ as a direct
$ku_*$-summand. We define
$$M_e:=ku_*(\Sigma^{-1}L_{2^e})\otimes_{ku_*}ku_*(\Sigma^{-1}L_{2^e}).$$ It is a  $ku_*$-module on classes $[i,j]:=z_i\otimes z_j$ of grading $2i+2j$, $i,j\ge0$, with relations
\begin{equation}\label{reln}\sum_{\ell=0}^i\tbinom{2^e}{\ell+1}u^\ell[i-\ell,j],\ i,j\ge0,\text{ and }\sum_{\ell=0}^j\tbinom{2^e}{\ell+1}u^\ell[i,j-\ell],\ i,j\ge0.\end{equation}
The desuspending was just for notational convenience. Note that the component of $M_e$ in grading $2d$, which we denote by $G_d$, is isomorphic to $ku^{4n-2d}(L^{2n}(2^e)\times L^{2n}(2^e))$ under the correspondence
$$u^k[i,j]\leftrightarrow u^kx^{n-i}y^{n-j}.$$
 Much of our work goes into an almost-complete description of $M_e$. The result is described in Section \ref{descripsec}.

In \cite[Theorem 2.1]{GVW},  it is proved that
the ideal $$I_e:=(2^e,\ 2^{e-1}u,\ 2^{e-2}u^{3\cdot2-2},\ 2^{e-3}u^{3\cdot2^2-2},\ldots,2^{1}u^{3\cdot2^{e-2}-2},\ u^{3\cdot2^{e-1}-2})$$
annihilates the bottom class $[0,0]$ of $M_e$, and in \cite[Conjecture 2.1]{GVW} it is conjectured that $I_e$ is precisely the annihilator ideal of $[0,0]$ in $M_e$.
One of our main theorems is that this conjecture is true.
\begin{thm}\label{annthm} For $e\ge1$, the annihilator ideal of $[0,0]$ in $M_e$ is precisely $I_e$.\end{thm}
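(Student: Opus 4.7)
The inclusion $I_e \subseteq \on{Ann}([0,0])$ is \cite[Theorem 2.1]{GVW}, so I need only establish the reverse containment. Because $M_e$ is a graded $ku_*$-module with $[0,0]$ in grading $0$ and $u$ in grading $2$, a general element $f(u) = \sum_k a_k u^k$ of $ku_*$ annihilates $[0,0]$ if and only if each monomial $a_k u^k$ does so in the grading-$2k$ summand $G_k$. Consequently, it suffices to compute, for every $k \ge 0$, the additive order of $u^k[0,0]$ in $G_k$ and match it against the order of $u^k$ in $ku_*/I_e$. By inspection of the generators, the latter order is $2^{e-j(k)}$, where $j(k)$ is the largest $j \ge 0$ with $3 \cdot 2^{j-1} - 2 \le k$ (with the convention $j(0) = 0$).

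The plan is to read these orders off the almost-complete description of $M_e$ promised in Section \ref{descripsec}. Explicitly, $G_k$ is the cokernel of a presentation matrix whose columns are indexed by monomials $u^m [i,j]$ with $m + i + j = k$ and whose rows are the specializations of the relations (\ref{reln}) that lie in grading $2k$. The entries are the binomial coefficients $\binom{2^e}{\ell + 1}$, whose $2$-adic valuations $e - \nu_2(\ell + 1)$ govern the cancellations. After row reducing this matrix so as to isolate the column indexed by $u^k[0,0]$, the Smith form should display the single invariant factor $2^{e - j(k)}$ killing $u^k[0,0]$, yielding the required lower bound on its order and so forcing the match with $I_e$.

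I would organize the calculation by grouping values of $k$ into the dyadic intervals $[3 \cdot 2^{j-1} - 2, \, 3 \cdot 2^j - 2)$ that correspond to the generator $2^{e-j} u^{3 \cdot 2^{j-1} - 2}$ of $I_e$, and by induction on $e$ (the case $e = 1$ being immediate since $I_1 = (2, u)$). Within each dyadic range a common pattern of row operations should work, shifting appropriately as $k$ varies. The main obstacle is handling the long $u$-tails of the relations (\ref{reln}): a single relation involves $[i,j], [i-1,j], \ldots, [0,j]$ with coefficients of widely varying $2$-adic valuation, so to extract a clean power of $2$ in the $[0,0]$-column one has to simultaneously clear every lower-order contribution. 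This is exactly the ``elaborate row reduction of presentation matrices of arbitrary size'' advertised in the abstract, and once it is performed, Theorem \ref{annthm} follows by combining the resulting matched orders with the established inclusion $I_e \subseteq \on{Ann}([0,0])$.
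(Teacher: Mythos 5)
Your proposal follows the paper's strategy exactly: take $I_e\subseteq\on{Ann}([0,0])$ from [GVW], then read the additive order of $u^k[0,0]$ in the grading-$2k$ summand $G_k$ off the diagonal of the row-reduced presentation matrix (Theorem \ref{genthm}) and match the jumps in $P_{k,k}$ at $k=3\cdot2^{\ell-1}-2$ to the generators of $I_e$. Your sketch correctly identifies the ``elaborate row reduction'' as the real content (Sections \ref{pfsec}--\ref{gensec}) and defers it; the minor imprecision of calling the target a Smith form is harmless, since the paper's upper-triangular (Hermite-like) form already determines the order of the last-indexed generator $u^k[0,0]$ as the bottom diagonal entry.
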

\noindent This is immediate from our description of $M_e$ in Section \ref{descripsec}. See the remark preceding Theorem \ref{genthm}.

After describing $M_e$ in Section \ref{descripsec}, we use this description in Section \ref{TCsec} to prove Theorem \ref{bthm}.
In Section \ref{pfsec}, we prove our result for $M_6$, and in Section \ref{gensec}, we explain how this proof generalizes to arbitrary $M_e$.
Finally, in Section \ref{annsec}, we give a different proof of Theorem \ref{annthm} for $e\le5$, one which is easily checked by a simple computer verification.

The author wishes to thank Jes\'us Gonz\'alez for suggesting this problem, some guidance as to method, and for providing some computer results which were very helpful for finding a general proof.

\section{Description of $M_e$}\label{descripsec}
Our approach to describing $M_e$ is via an associated matrix $P_e$ of polynomials, which we row reduce. The row-reduced form of $P_e$ is quite complicated,
and involves some polynomials which are not completely determined. That is why we call our description ``almost complete.'' In this section, we approach the description of $M_e$ in three steps.

First we give an introduction to our method, define the polynomial matrices $P_e$, and give in Table \ref{init}, without proof, the reduced form of $P_4$, obtained without a computer.
The result for $P_4$ is not used in our general proof, but provides a useful example for comparison. Jes\'us Gonz\'alez obtained an equivalent
result using {\tt Mathematica}.

Next we give in Theorem \ref{longthm} an almost-complete description of the reduced form of $P_6$. This incorporates all aspects of the general reduced $P_e$, but is still describable
in a moderately tractable way.
Finally we give in Theorem \ref{genthm} the general result for $P_e$, which involves a plethora of indices.

Let $G_d$ denote the component of  $M_e$ in grading $2d$. Our ordered set of generators for $G_d$ is
\begin{equation}\label{gens}[0,d],\ldots,[d,0],u[0,d-1],\ldots,u[d-1,0],\ldots,u^d[0,0].\end{equation}
Our final presentation matrix of $G_d$ will be a partitioned matrix
$$\begin{pmatrix}M_{0,0}&M_{0,1}&\ldots&M_{0,d}\\
\vdots&\vdots&\vdots&\vdots\\
M_{d,0}&M_{d,1}&\ldots&M_{d,d}\end{pmatrix},$$
where $M_{i,j}$ is a $(d+1-i)$-by-$(d+1-j)$ Toeplitz matrix.  The columns in a block $M_{i,j}$ correspond to monomials $u^j[-,-]$.

We will use polynomials to represent the submatrices $M_{i,j}$. A polynomial or power series $p(x)=\a_0+\a_1x+\a_2x^2+\cdots$
corresponds to a Toeplitz matrix (of any size) with $(j+k,j)$ entry equal to $\a_k$. Thus the matrix is
$$\begin{pmatrix}a_0&0&0&\\ \a_1&\a_0&0&\ddots\\ \a_2&\a_1&\a_0&\\ \a_3&\a_2&\a_1&\ddots\\ \a_4&\ddots&\ddots&\ddots\\ \vdots&\vdots&\vdots&\vdots
\end{pmatrix}$$
We define $P_e$ to be the polynomial matrix associated to the partitioned presentation matrix of $M_e$ corresponding to the generators (\ref{gens}) and relations (\ref{reln}).
In   (\ref{inl}) we depict $P_6$.

We let
$$p_n(x)=\frac{x^n-1}{x-1}=1+x+\cdots+x^{n-1}.$$
We will display a single upper-triangular matrix of polynomials, whose restriction to the first $d+1$ columns  yields a presentation of $G_d$ for all $d$.
For example, we will see that the first 8 columns for the reduced form of $P_4$  are
$$\begin{pmatrix}16&0&0&4xp_2(x)&0&0&0&2xp_6(x)\\
&8&0&4p_3(x)&0&0&0&2p_7(x)\\
&&8&0&0&0&0&0\\ &&&8&0&0&0&0\\ &&&&4&0&0&0\\ &&&&&4&0&0\\ &&&&&&4&0\\ &&&&&&&4\end{pmatrix}.$$
This implies that a presentation matrix of $G_7$ is as below.
$$\begin{pmatrix}16I_8&0&0&M_{0,3}&0&0&0&M_{0,7}\\
&8I_7&0&M_{1,3}&0&0&0&M_{1,7}\\
&&8I_6&0&0&0&0&0\\
&&&8I_5&0&0&0&0\\ &&&&4I_4&0&0&0\\ &&&&&4I_3&0&0\\ &&&&&&4I_2&0\\ &&&&&&&4I_1\end{pmatrix},$$
where $I_t$ is a $t$-by-$t$ identity matrix, and
$$M_{0,3}=\begin{pmatrix}0&0&0&0&0\\ 4&0&0&0&0\\ 4&4&0&0&0\\ 0&4&4&0&0\\ 0&0&4&4&0\\ 0&0&0&4&4\\ 0&0&0&0&4\\ 0&0&0&0&0\end{pmatrix},\quad
M_{0,7}=\begin{pmatrix}0\\ 2\\ 2\\ 2\\ 2\\ 2\\ 2\\ 0\end{pmatrix},\quad M_{1,3}=\begin{pmatrix}4&0&0&0&0\\ 4&4&0&0&0\\ 4&4&4&0&0\\
0&4&4&4&0\\ 0&0&4&4&4\\ 0&0&0&4&4\\ 0&0&0&0&4\end{pmatrix},\quad M_{1,7}=\begin{pmatrix}2\\ 2\\ 2\\ 2\\ 2\\ 2\\ 2\end{pmatrix}.$$

 The precise reduced form of $P_4$ is as in Table \ref{init}.
We do not offer a proof here, but can prove it by the methods of Section \ref{pfsec}.
 We often write $p_k$ instead of $p_k(x)$.

\bs
\bigskip
\begin{tabl}\label{init}
%\caption{Reduced form of $P_4$}
\begin{center}
{\scalefont{.6}{
$\renewcommand\arraystretch{1.2}\begin{array}{r|ccccccccccccccccc}
&0&1&2&3&4&5&6&7&8&9&10&11&12&13&14&15&16\\
\hline
0&16&0&0&4xp_2&0&0&0&2xp_6&0&0&0&0&0&0&0&xp_{14}&0\\
1&&8&0&4p_3&0&0&0&2p_7&0&0&0&0&0&0&0&p_{15}&0\\
2&&&8&0&0&0&0&0&2x^2p_2(x^2)&2xp_6&0&0&0&0&0&0&x^2p_6(x^2)\\
3&&&&8&0&0&0&0&0&2x^2p_2(x^2)&0&0&0&0&0&0&0\\
4&&&&&4&0&0&0&2p_3(x^2)&2xp_2(x^3)&0&0&0&0&0&0&p_7(x^2)\\
5&&&&&&4&0&0&0&2p_3(x^2)&0&0&0&0&0&0&0\\
6&&&&&&&4&0&0&0&0&0&0&0&0&0&0\\
7&&&&&&&&4&0&0&0&0&0&0&0&0&0\\
8&&&&&&&&&4&0&0&0&0&0&0&0&0\\
9&&&&&&&&&&4&0&0&0&0&0&0&0\\
10&&&&&&&&&&&2&0&0&0&0&0&0\\
11&&&&&&&&&&&&2&0&0&0&0&0\\
12&&&&&&&&&&&&&2&0&0&0&0\\
13&&&&&&&&&&&&&&2&0&0&0\\
14&&&&&&&&&&&&&&&2&0&0\\
15&&&&&&&&&&&&&&&&2&0\\
16&&&&&&&&&&&&&&&&&2
\end{array}$}}
\end{center}
\end{tabl}

\bs
{\scalefont{.53}{
$\renewcommand\arraystretch{1.3}\begin{array}{r|cccccccc}
&17&18&19&20&21&22\\
\hline
0&0&0&0&x^7p_4(x^2)&x^5p_2(x^2)p_4(x^3)&0\\
1&0&0&0&x^6p_8&x^4p_2(x^{12})&0\\
2&xp_6p_3(x^4)&0&x^3p_3p_2(x^2)p_2(x^7)&x^4p_4p_2(x^7)&xp_2p_2(x^{16})+x^8p_2(x^3)&0\\
3&x^2p_6(x^2)&0&0&x^5p_2(x^3)p_2(x^4)&x^4p_2(x^7)p_4&0\\
4&xp_2(x^3)p_3(x^4)&0&x^3p_2(x^2)p_2(x^7)&x^4p_2(x^2)p_2(x^6)&xp_2(x^9)(1+x^2p_3+x^6)&0\\
5&p_7(x^2)&0&0&x^5p_2p_2(x^4)&x^4p_2(x^2)p_2(x^6)&0\\
6&0&x^4p_2(x^4)&0&x^2p_6(x^2)&x^5p_2p_2(x^4)&0\\
7&0&0&x^4p_2(x^4)&0&x^2p_6(x^2)&0\\
8&0&0&0&x^4p_2(x^4)&0&0\\
9&0&0&0&0&x^4p_2(x^4)&0\\
10&0&p_3(x^4)&0&x^2p_2(x^6)&0&0\\
11&0&0&p_3(x^4)&0&x^2p_2(x^6)&0\\
12&0&0&0&p_3(x^4)&0&0\\
13&0&0&0&0&p_3(x^4)&0\\
14&0&0&0&0&0&0\\
15&0&0&0&0&0&0\\
16&0&0&0&0&0&0\\
17&2&0&0&0&0&0\\
18&&2&0&0&0&0\\
19&&&2&0&0&0\\
20&&&&2&0&0\\
21&&&&&2&0\\
22&&&&&&1
\end{array}$}}

\bs

\ni The abelian group that the associated matrix of numbers presents has 276 generators and 276 relations. This associated matrix of numbers is almost, but not quite, in Hermite form.
For example, the polynomial in position $(2,17)$ contains terms such as $2x^5$, and so the associated matrix of numbers will have some 2's sitting far above 2's at the bottom
of the column. For the matrix to be Hermite, all nonzero entries above a 2 at the bottom should be 1's. We could obtain such a polynomial in position $(2,17)$ by subtracting
$(x^5+x^6+x^9+x^{10})$ times row 17 from row 2. We have chosen not to do this here because it will be important to our reduction that the first three nonzero entries in column 17
are $\frac12p_3(x^4)$ times the corresponding entries of column 9.

By restricting to $G_1$, the 8 in position $(1,1)$ shows that $8u[0,0]=0$ in $M_4$. Similarly, by restriction to $G_4$, the 4 in position $(4,4)$ implies that $4u^4[0,0]=0$. We also
obtain $2u^{10}[0,0]=0$ and $u^{22}[0,0]=0$ from the matrix. The Hermite form of the associated matrix of numbers implies that $8[0,0]$, $4u^3[0,0]$, $2u^9[0,0]$, and $u^{21}[0,0]$
are all
nonzero, and this implies Theorem \ref{annthm} when $e=4$.

Next we describe  the reduced  form of $P_6$. We let $P_{i,j}$ denote the entry in  row $i$ and column $j$, where the numbering of each starts with 0.
Throughout the paper, the same notation $P_{i,j}$ will be used for entries in the matrix at any stage of the reduction.
\begin{thm}\label{longthm} The reduced form of the matrix $P_6$ is upper-triangular
with diagonal  entries
$$P_{i,i}=\begin{cases}64&i=0\\32&1\le i\le3\\
16&4\le i\le9\\
8&10\le i\le21\\
4&22\le i\le 45\\
2&46\le i\le93\\1&i=94.\end{cases}$$
Other than these, the nonzero entries are as described below.

\ss
\begin{enumerate}
\item[a.] There are none in columns $0$--$2$, $4$--$6$, $10$--$14$, $22$--$30$,  $46$--$62$, and $94$.
\item[b.] The nonzero entries in columns $3$, $7$--$9$, $15$--$17$, $31$--$33$, and $63$--$65$ are as below.

\ss
{\scalefont{.83}{
$\renewcommand\arraystretch{1.3}\begin{array}{r|c|ccc|ccc}
&3&7&8&9&15&16&17\\
\hline
0&16xp_2&8xp_6&&&4xp_{14}&&\\
1&16p_3&8p_7&&&4p_{15}&&\\
2&&&8x^2p_2(x^2)&8xp_6&&4x^2p_6(x^2)&4xp_6p_3(x^4)\\
3&&&&8x^2p_2(x^2)&&&4x^2p_6(x^2)\\
4&&&8p_3(x^2)&8xp_2(x^3)&&4p_7(x^2)&4xp_2(x^3)p_3(x^4)\\
5&&&&8p_3(x^2)&&&4p_7(x^2)\end{array}$}}

\bigskip
{\scalefont{.83}{
$\renewcommand\arraystretch{1.3}\begin{array}{r|ccc|ccc}
&31&32&33&63&64&65\\
\hline
0&2xp_{30}&&&xp_{62}&&\\
1&2p_{31}&&&p_{63}&&\\
2&&2x^2p_{14}(x^2)&2xp_{6}p_7(x^4)&&x^2p_{30}(x^2)&xp_{6}p_{15}(x^4)\\
3&&&2x^2p_{14}(x^2)&&&x^2p_{30}(x^2)\\
4&&2p_{15}(x^2)&2xp_2(x^3)p_7(x^4)&&p_{31}(x^2)&xp_2(x^3)p_{15}(x^4)\\
5&&&2p_{15}(x^2)&&&p_{31}(x^2)\end{array}$}}

\bigskip
\item[c.] The nonzero entries in columns $18$--$21$, $34$--$37$, and $66$--$69$ are as in Table \ref{T3}. Here $B$ refers to everything in the $18$--$21$ block except the $4p_3(x^4)$-diagonal near the bottom.
The $\bullet$s along a diagonal refer to the entry at the beginning of the diagonal. Each letter $q$ refers
to a polynomial. These polynomials are, for the most part, distinct. The meaning of the diagram is that, except for the diagonal near the bottom,
each entry in the middle portion equals $\frac12p_3(x^8)$ times the corresponding entry in the left portion, and similarly for the right portion, as indicated.
More formally, for $18\le j\le21$ and $i<j-8$,
$$P_{i,j+16}=\tfrac12p_3(x^8)\cdot P_{i,j}\text{ and }P_{i,j+48}=\tfrac14p_7(x^8)\cdot P_{i,j}.$$

\bigskip
\begin{tabl}\label{T3}

\begin{center}
{\scalefont{.83}{
$\renewcommand\arraystretch{1.3}\begin{array}{r|cccc|cccc|cccc|}
&18&19&20&21&34&35&36&37&66&67&68&69\\
\hline
0&0&0&4q&4q&&&&&&&&\\
1&0&0&4q&4q&&&&&&&&\\
2&0&4q&4q&4q&&&&&&&&\\
3&0&0&4q&4q&\multicolumn{3}{c}{B\cdot\frac12p_3(x^8)}&&\multicolumn{3}{c}{B\cdot\frac14p_7(x^8)}&\\
4&0&4q&4q&4q&&&&&&&&\\
5&0&0&4q&4q&&&&&&&&\\
6&4x^4p_2(x^4)&0&4q&4q&&&&&&&&\\
7&0&\dig&0&4q&&&&&&&&\\
8&0&0&\dig&0&&&&&&&&\\
9&0&0&0&\digs&&&&&&&&\\
10&4p_3(x^4)&0&4q&4q&2p_7(x^4)&&&&p_{15}(x^4)&&&\\
11&0&\dig&0&4q&&\dig&&&&\dig&&\\
12&0&0&\dig&0&&&\dig&&&&\dig&\\
13&0&0&0&\digs&&&&\digs&&&&\digs
\end{array}$}}
\end{center}
\end{tabl}

\bigskip
\item[d.] Similarly, the nonzero elements in columns 38 to 45 (other than $P_{i,i}$) are as in Table \ref{T2}. If $C$ denotes all the entries except the
$2p_3(x^8)$-diagonal near the bottom, then columns 70 to 77 are filled exactly with $C\cdot \frac12p_3(x^{16})$ together with a $p_7(x^8)$-diagonal going down from $(22,70)$.

\bigskip
\begin{tabl}\label{T2}

\begin{center}
{\scalefont{.83}{
$$\renewcommand\arraystretch{1.3}\begin{array}{c|cccccccc}
&38&39&40&41&42&43&44&45\\
\hline
0&0&0&2q&2q&2q&2q&2q&2q\\
1&0&0&2q&2q&2q&2q&2q&2q\\
2&0&2q&2q&2q&2q&2q&2q&2q\\
3&0&0&2q&2q&2q&2q&2q&2q\\
4&0&2q&2q&2q&2q&2q&2q&2q\\
5&0&0&2q&2q&2q&2q&2q&2q\\
6&0&0&2q&2q&2q&2q&2q&2q\\
7&0&0&0&2q&2q&2q&2q&2q\\
8&0&0&0&0&2q&2q&2q&2q\\
9&0&0&0&0&0&2q&2q&2q\\
10&0&0&2q&2q&2q&2q&2q&2q\\
11&0&0&0&2q&2q&2q&2q&2q\\
12&0&0&0&0&2q&2q&2q&2q\\
13&0&0&0&0&0&2q&2q&2q\\
14&2x^8p_2(x^8)&0&0&0&2q&0&2q&2q\\
15&0&\dig&0&0&0&2q&0&2q\\
16&0&0&\dig&0&0&0&2q&0\\
17&0&0&0&\dig&0&0&0&2q\\
18&0&0&0&0&\dig&0&0&0\\
19&0&0&0&0&0&\dig&0&0\\
20&0&0&0&0&0&0&\dig&0\\
21&0&0&0&0&0&0&0&\digs\\
22&2p_3(x^8)&0&0&0&2q&0&2q&2q\\
23&0&\dig&0&0&0&2q&0&2q\\
24&0&0&\dig&0&0&0&2q&0\\
25&0&0&0&\dig&0&0&0&2q\\
26&0&0&0&0&\dig&0&0&0\\
27&0&0&0&0&0&\dig&0&0\\
28&0&0&0&0&0&0&\dig&0\\
29&0&0&0&0&0&0&0&\digs
\end{array}$$}}
\end{center}
\end{tabl}

\bigskip
\item[e.] Finally, columns 78 to 93 have a form very similar to Table \ref{T2} with $q$ instead of $2q$
and rows going from $0$ to $61$.
The lower two diagonals are  $x^{16}p_2(x^{16})$ coming down from
$(30,78)$ and $p_3(x^{16})$ coming down from $(46,78)$, and these are the only non-leading
nonzero entries in column ${78}$.
\end{enumerate}
\end{thm}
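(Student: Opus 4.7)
The plan is to carry out the row reduction of $P_6$ explicitly, in column-major order, starting from the initial polynomial matrix encoded by the relations (\ref{reln}) with $e=6$. The initial matrix is upper-triangular with Toeplitz blocks whose generating polynomial is $\sum_{k\ge 0}\binom{64}{k+1} u^k x^k$. The key arithmetic input is Kummer's theorem, which gives $v_2\bigl(\binom{64}{k}\bigr) = 6 - v_2(k)$ for $1 \le k \le 63$; this dictates which diagonal value $2^{6-s}$ should appear at which position. The stated block sizes $1, 3, 6, 12, 24, 48, 1$ arise because exactly that many columns must be processed before the pivot valuation drops by one, a fact that should follow directly from tabulating the valuations $v_2(k)$ for $1 \le k \le 64$.

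For the ``clean'' columns in part (a), I would verify that all off-diagonal entries can be eliminated: each such entry lies in a row whose pivot is a strictly larger power of $2$ than the column's pivot, and the entry itself is divisible by the column's pivot by the block-Toeplitz structure, so a row subtraction clears it. The sporadic columns in part (b) are precisely where this divisibility argument fails by a single power of $2$; the surviving entries $2^{6-j} x p_{2^j-2}(x)$ in the first row for columns $2^j - 1$, together with their companions in rows $2$–$5$, should fall out of the leading segment of $[64](x)/64$ combined with the Toeplitz shifts into adjacent blocks.

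The heart of the theorem is the self-similar structure in parts (c), (d), (e), encoded by the identities $P_{i,j+16} = \tfrac12 p_3(x^8)\, P_{i,j}$ and $P_{i,j+48} = \tfrac14 p_7(x^8)\, P_{i,j}$ (and the analogous statements in part (d)). My approach is an induction on block depth: once columns $18$–$21$ are reduced to the forms in Table \ref{T3}, the corresponding initial columns $34$–$37$ differ from the initial columns $18$–$21$ by an overall factor related to $\tfrac12 p_3(x^8)$, up to entries from the new lower-valuation pivot row that enter the picture. The identical row operations, rescaled appropriately, then reduce them to the stated scaled forms. This structural identity should ultimately trace back to a factorization of $[64](x)$ relating it to $[32](x)$ and $[16](x)$ through the cyclotomic-like polynomials $p_{2^t - 1}(x^{2^j})$.

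The main obstacle I anticipate is controlling the $q$-indeterminacy in Tables \ref{T3} and \ref{T2}. The reduction is not unique, so the precise polynomials $q$ depend on chosen row operations, but the pattern of forced zero positions and the leading $2^k$ coefficients must be invariant under such choices. Proving this comes down to showing that the row subtractions used to clear each column never reintroduce nonzero entries at the ``prescribed zero'' positions, which in turn requires careful degree-and-valuation tracking. The seed entries like $4 x^4 p_2(x^4)$ at position $(6, 18)$ and $4 p_3(x^4)$ at position $(10, 18)$ that initiate the $\dig$ and $\digs$ diagonals will need separate explicit identification, since they break the regular Toeplitz pattern and pin down the geometric structure of the subsequent layers.
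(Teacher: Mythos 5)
Your proposal takes a genuinely different route from the paper's. You propose a direct forward reduction of the initial matrix, processing columns left to right and tracking valuations as you go. The paper instead uses a remarkable fixed-point argument: because deleting the first two rows and first column of the initial matrix for $G_d$ gives the initial matrix for $G_{d-1}$, the reduced matrix $Q_d$ restricted to its first $d$ columns must equal $Q_{d-1}$. So the paper starts with the \emph{putative answer}, shifts it one column right and two rows down, prepends the two rows of (\ref{inl}), and verifies that row reduction reproduces the putative answer. This means all the entries involved in the reduction are already known in closed form, which is what makes the bookkeeping tractable; your forward approach gives no such handle on the intermediate matrices.

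The more serious issue is what you do not address at all: the divisions by $x$, $x^2$, $x^4$, $x^8$, $x^{16}$ that occur when the pivot rows (rows $2$, $5$, $11$, $23$, $47$ in the paper's numbering) acquire leading entries of the form $2^{e-s-1}x^{2^s}p_2(x^{2^s})$. These divisions are only legitimate if \emph{every} entry in the row is divisible by the corresponding power of $x$, and establishing that is the technical heart of the paper (Theorem \ref{count}), which sets up and propagates explicit lower bounds $\mu_k(i)$ on $x$-divisibility through each step. Your proposal mentions ``careful degree-and-valuation tracking'' only for the $q$-indeterminacy, not for the $x$-divisibility of whole rows, and without a lemma playing the role of Theorem \ref{count} the reduction simply cannot be carried out as polynomial row operations. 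Relatedly, your suggestion that the proportionality relations $P_{i,j+16}=\tfrac12 p_3(x^8)P_{i,j}$ ``trace back to a factorization of $[64](x)$'' is optimistic: those relations are \emph{not} visible in the initial matrix, but are created and then preserved by the specific row operations. The paper handles this by observing that proportionality and the $x$-divisibility bounds are invariants of the allowed row operations (and by carefully avoiding the rows where they fail), whereas you would need to prove the relations emerge from scratch, which is a substantially harder statement than you acknowledge.
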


 Now we state the general theorem, of
which Theorem
\ref{annthm} is an immediate consequence, since the first occurrence of $2^{e-\ell}$ along the diagonal occurs in $(3\cdot2^{\ell-1}-2,3\cdot2^{\ell-1}-2)$.

\begin{thm}\label{genthm} Let $P_{i,j}$ denote the entries in the reduced polynomial matrix for $M_e$. The nonzero entries are
\begin{enumerate}
\item[i.] For $0\le s\le e-1$ and $3\cdot2^s-2\le i<4\cdot2^s-2$ and $0\le t\le e-1-s$,
$$P_{i,i+2^{s+1}(2^t-1)}=2^{e-1-s-t}p_{2^{t+1}-1}(x^{2^s}).$$
\item[ii.] For $0\le s\le e-1$ and $2\cdot2^s-2\le i<3\cdot2^s-2$, $P_{i,i}=2^{e-s}$ and, for $2\le t\le e-s$,
$$P_{i,i+2^{s}(2^t-1)}=2^{e-s-t}x^{2^s}p_{2^t-2}(x^{2^s}).$$
\item[iii.] For $3\le t\le e$ and $2^t+2^{t-2}-2\le j\le 2^t+2^{t-1}-3$, there are possibly nonzero entries
$P_{i,j}=2^{e-t}q_{i,j}$ for $0\le i<j-2^{t-1}$, and also, for $1\le v\le e-t$,
$$P_{i,j+2^t(2^v-1)}=2^{e-t-v}p_{2^{v+1}-1}(x^{2^{t-1}})q_{i,j}.$$
\end{enumerate}
%If, for $i\ge2$, $\theta(i)$ denotes $i-c$, where $c$ is the largest integer of the form $2^k$ or $3\cdot 2^k$ with $c\le i$, while $\theta(0)=4$ and $\theta(1)=3$, then
%$q_{i,j,u,t}$ is divisible by $x^r$ with $r=3\cdot 2^{t-1}-2-j+\theta(i)$.
\end{thm}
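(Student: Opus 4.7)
The plan is to prove Theorem \ref{genthm} by an inductive row reduction of $P_e$ that extends the explicit computation of $P_6$ carried out for Theorem \ref{longthm} in Section \ref{pfsec}. I would organize the work into phases indexed by $s = 0, 1, \ldots, e-1$, where phase $s$ handles the band of rows with diagonal values $2^{e-s}$ and $2^{e-1-s}$. The upper half of each band, rows $2\cdot 2^s - 2 \le i < 3\cdot 2^s - 2$, produces the entries of part (ii); the lower half, rows $3\cdot 2^s - 2 \le i < 4\cdot 2^s - 2$, produces the entries of part (i); and the undetermined polynomials $q_{i,j}$ of part (iii) live in the columns that straddle the boundary between adjacent phases.

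The polynomial identities controlling the entries produced by each pivot are the technical heart of the proof. Kummer's formula $v_2(\tbinom{2^e}{k}) = e - v_2(k)$ forces the diagonal to drop in $2$-adic valuation in blocks, producing the diagonal pattern $P_{i,i}=2^{e-s}$ or $2^{e-1-s}$. The off-diagonal entries $p_{2^{t+1}-1}(x^{2^s})$ and $x^{2^s}p_{2^t-2}(x^{2^s})$ emerge when a pivot of value $2^{e-s}$ or $2^{e-1-s}$ is used to clear later columns: the $2^e$-series $[2^e](x)$, after being divided by the pivot and reduced modulo the relevant power of $2$, yields a geometric polynomial expression of exactly the required form. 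These are the same identities verified by hand in the $e=6$ computation of Section \ref{pfsec}, and they extend to general $e$ by the same $2$-adic bookkeeping. This treats (i) and (ii) directly, and with some care identifies the precise column shifts $2^{s+1}(2^t-1)$ and $2^{s}(2^t-1)$ at which each polynomial lands.

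The main obstacle is part (iii), where no explicit formula is claimed for $q_{i,j}$, only a scaling relation $P_{i, j+2^t(2^v-1)} = 2^{e-t-v} p_{2^{v+1}-1}(x^{2^{t-1}}) q_{i,j}$ between the seed columns and their shifts. I would prove this as an invariant maintained by the row reduction: once the seed block $2^t + 2^{t-2} - 2 \le j \le 2^t + 2^{t-1} - 3$ has been stabilized at the end of phase $t-1$, each shifted block $j + 2^t(2^v - 1)$ for $1 \le v \le e-t$ inherits the scaling factor $p_{2^{v+1}-1}(x^{2^{t-1}})$ automatically, because the pivot rows used to process those shifted columns come from phases in which (i)/(ii) supply exactly the requisite polynomial multipliers. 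The induction step amounts to checking that subsequent phases do not disrupt the seed-to-shift proportionality; this follows because later pivots act only through the prescribed polynomial factors, which commute with the scaling multiplier. The base case of the invariant is precisely what the explicit $e=6$ computation confirms in Section \ref{pfsec}, and Section \ref{gensec} then packages this bookkeeping into the general statement, with Theorem \ref{annthm} dropping out as an immediate corollary because the first occurrence of $2^{e-\ell}$ on the diagonal is at position $(3\cdot 2^{\ell-1}-2,\, 3\cdot 2^{\ell-1}-2)$, matching the ideal $I_e$.
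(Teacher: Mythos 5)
Your proposal captures the right high-level organization (phases indexed by $s$, proportionality as an invariant, $2$-adic bookkeeping from Kummer), and it is broadly aligned with the paper's $N_0,\ldots,N_{e-1}$ staging. But there are two substantial gaps.

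First, you never address $x$-divisibility, which the paper identifies as ``the reason for much of our work.'' Each phase of the reduction requires dividing the new pivot row by $x^{2^s}p_2(x^{2^s})$ (so that, e.g., $8x^2p_2(x^2)$ in a column becomes an $8$). For this division to be legal, every entry of that row must be divisible by $x^{2^s}$ \emph{at that stage of the reduction}. This is not automatic; it requires propagating explicit lower bounds on $x$-exponents through every row operation, and it fails in certain rows (those carrying the lower diagonals of Tables \ref{T3p} and \ref{T2p}). The paper handles this via Theorem \ref{count} for $e=6$ and property (e) of Definition \ref{Ndef} with the functions $\eta_s$ in general, and must carefully arrange that the ``bad'' rows where divisibility fails are never used to modify other rows. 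Without this bookkeeping, the crucial division steps in your proposed reduction are unjustified, and the argument does not go through.

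Second, your proposal appears to run a forward reduction from the initial presentation and ``discover'' the reduced form as it goes. The paper instead uses a fixed-point argument exploiting the shift-invariance of the initial matrix: one starts from the \emph{putative} answer $N_{e-1}$, deletes its last column, prepends a zero column and the two top rows of the original $P_e$, and verifies that reducing returns the same matrix. This is what makes the general-$e$ proof tractable --- you only verify that a guessed structure is self-consistent, rather than re-derive it. A forward reduction as you sketch it is much harder to close off, because one must simultaneously track which polynomials appear (the paper needs explicit identities such as $p_{2t}/p_2 = p_t(x^2)$ and $p_{2k+1}-p_3p_k(x^2)=-x^2p_{k-1}(x^2)$ at each step) while also controlling which rows may participate in operations so that proportionality and $x$-divisibility survive. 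Your proportionality invariant for part (iii) is the right idea, but without specifying that rows carrying the exceptional lower diagonals must be excluded from row operations, the invariant would be destroyed at the first step that touches them.
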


This generalizes Table \ref{init} and Theorem \ref{longthm}.
Note that some of the entries of type ii are among the entries of type iii. Note also that $p_1(x)=1$, and that in part i for $s=e-1$, we usually just consider the smallest value of
$i$.

\section{Proof of Theorem \ref{bthm}}\label{TCsec}
In this section, we prove Theorem \ref{bthm} by proving
the equivalent statement
\begin{equation}\label{b2thm}\text{if }0\le t<e\text{ and }\a(m)=t+e,\text{ then }
b(m+2^t-1,e)\ge 2m-2^t.
\end{equation}
 The case $t=0$ is elementary (\cite[(1.3)]{GVW}) and is omitted.
We will first prove the following cases of  (\ref{b2thm}) and then will show that all other cases follow by naturality.
\begin{thm} \label{2cases} For $1\le t<e$,
\begin{equation}\label{c1} b(3\cdot2^{t-1}-1+2^{t+1}B,e)\ge2^{t+2}B\text{ if }\a(B)=e+t-1,\end{equation}
and
\begin{equation}\label{c2} b(2^t-1+2^tB,e)\ge(2B-1)2^t\text{ if }\a(B)=e+t.\end{equation}
\end{thm}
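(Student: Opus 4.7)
The plan is to prove Theorem \ref{2cases} by contradiction. Suppose (\ref{c1}) or (\ref{c2}) fails; then for $k+1=2^{t+2}B$ (case (\ref{c1})) or $k+1=(2B-1)2^t$ (case (\ref{c2})), there exists a map $f\colon L^{2n+1}(2^e)\times L^{2n+1}(2^e)\to L^{2k+1}(2^e)$ as in (\ref{Lmap}). Applying $ku^*(-)$, and using that the composite of $f$ with the inclusion $L^{2k+1}(2^e)\hookrightarrow L^\infty(2^e)=B\Z/2^e$ is homotopic to the H-space multiplication $\mu$, while $\mu^*(x')=x+_Fy=x+y+uxy$ for the standard $ku$-Euler class $x'\in ku^2(B\Z/2^e)$ and $(x')^{k+1}=0$ in $ku^*(L^{2k+1}(2^e))$, naturality gives
$$(x+y+uxy)^{k+1}=0\quad\text{in }ku^*(L^{2n+1}(2^e)\times L^{2n+1}(2^e)).$$
The contradiction will come from proving that this class is in fact \emph{nonzero}.

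A direct computation shows that in both cases $2n-(k+1)=3\cdot 2^t-2=:d$. Via the isomorphism $ku^{\text{ev}}(L^{2n+1}(2^e))\cong ku^{\text{ev}}(L^{2n}(2^e))$ and the correspondence $u^jx^{n-i_1}y^{n-i_2}\leftrightarrow u^j[i_1,i_2]$, multinomial expansion translates the above equation into the explicit vanishing in $G_d\subset M_e$ of
$$\Xi\;=\;\sum_{\substack{a+b+c=k+1\\ a+c\le n,\ b+c\le n}}\binom{k+1}{a,b,c}\,u^{c}\,[n-a-c,\,n-b-c].$$
The task is now to show $\Xi\ne 0$ in $G_d$.

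For this, we invoke the explicit reduced presentation of Theorem \ref{genthm}. By part (i) with $s=t$ and $t'=0$, the diagonal entry in position $(d,d)$ of the reduced matrix is $P_{d,d}=2^{e-1-t}$, so the generator $u^d[0,0]$ has order exactly $2^{e-1-t}$ modulo the submodule spanned by relations coming from strictly earlier columns. The strategy is to reduce $\Xi$ against the columns of the upper-triangular reduced matrix from left to right, using the off-diagonal entries described in Theorem \ref{genthm}(i)--(iii), and to track the coefficient of $u^d[0,0]$ at the end. We expect this coefficient to have 2-adic valuation strictly less than $e-1-t$ (indeed, equal to $e-2-t$), which certifies $\Xi\ne 0$.

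The main obstacle is the bookkeeping of the reduction. Many off-diagonal entries are complicated polynomials, especially the partially-determined $q_{i,j}$ entries of Theorem \ref{genthm}(iii), and the multinomial sum $\Xi$ has a large number of summands. The crucial input that makes the calculation tractable is that the hypotheses $\alpha(B)=e+t-1$ (in (\ref{c1})) and $\alpha(B)=e+t$ (in (\ref{c2})) are tuned so that, by Kummer's theorem, the vast majority of the multinomials $\binom{k+1}{a,b,c}$ have high 2-adic valuation. This collapses $\Xi$ to a small, structured sum, and the final nonvanishing of its coefficient at $u^d[0,0]$ reduces to a single 2-adic congruence that can be verified directly.
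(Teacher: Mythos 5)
Your overall framework — applying $ku^*$ to the hypothetical map, extracting that a power of the pulled-back Euler class vanishes, and then showing it cannot vanish by consulting the reduced presentation of $G_d$ from Section \ref{descripsec} — is the same as the paper's. But you have missed the single most important simplification, and this creates a real gap. The paper first precomposes the map with the self-map $(1,-1)$ of $L^{2n}(2^e)\times L^{2n}(2^e)$ (following \cite{Ast}), which replaces $x+_Fy=x+y+uxy$ by the formal difference $x-_Fy=(x-y)/(1+uy)$; since $1+uy$ is a unit, the vanishing statement becomes simply $(x-y)^{k+1}=0$. The resulting binomial expansion $\sum_j\binom{k+1}{j}(-1)^jx^jy^{k+1-j}$ has a clean Kummer analysis: $\nu\binom{2^{t+2}B}{j}$ equals $\a(B)$ exactly at the central index $j=2^{t+1}B$ and is strictly larger at nearby $j$, which is what isolates the class $2^{e+t-1}x^{d/2}y^{d/2}+2^{e+t}f(x,y)$. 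Your trinomial sum $\Xi=\sum\binom{k+1}{a,b,c}u^c[n-a-c,n-b-c]$ does not enjoy a comparably sharp minimum: $\nu_2\binom{k+1}{a,b,c}=\a(a)+\a(b)+\a(c)-\a(k+1)$ can be small for many triples $(a,b,c)$ once $c>0$ is allowed, and the assertion that Kummer's theorem ``collapses $\Xi$ to a small, structured sum'' is not justified and is not obviously true. In short, the $(1,-1)$ precomposition is not cosmetic; without it your proposed 2-adic bookkeeping is on much shakier ground.

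There is a second, smaller divergence in the endgame. You propose to reduce $\Xi$ against the full upper-triangular matrix from left to right and inspect the coefficient of the final generator $u^d[0,0]$. The paper does something shorter: it restricts the reduced presentation to the rows and columns indexed by $3\cdot2^i-3$ (Table \ref{qtbl}), traces the explicit chain of equivalences (\ref{first}) that terminates at $\pm2^{e-t-1}(u^{d-1}[1,0]+u^{d-1}[0,1])$, and reads off nonvanishing from the diagonal entry $2^{e-t}$ in column $d-1$, not from column $d$. The paper also must — and does — verify two subsidiary points you do not address: that the unspecified $q$-polynomials in Table \ref{qtbl} contribute only terms annihilated after $t$ further reduction steps, and that the higher-valuation correction $2^{e+t}f(x,y)$ from the off-central binomials likewise dies. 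Your sketch leaves these as unacknowledged gaps, so the proposal as written does not constitute a proof.
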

These are the cases $m=2^{\a(B)-e}(4B+1)$ and $m=2^{\a(B)-e}B$ of Theorem \ref{bthm} or (\ref{b2thm}).
\begin{proof} We focus on (\ref{c1}), and then discuss the minor changes required for (\ref{c2}). Let $n=3\cdot2^{t-1}-1+2^{t+1}B$ and suppose there is a map
$$L^{2n}(2^e)\times L^{2n}(2^e)\to L^{2^{t+3}B-1}(2^e)$$
as in (\ref{Lmap}). Precompose with the self-map $(1,-1)$ of $L^{2n}(2^e)\times L^{2n}(2^e)$, where $-1$ is homotopic to the Hopf inverse of the identity.
Then, as in \cite{Ast}, we obtain $$(x-y)^{2^{t+2}B}=0\in ku^*(L^{2n}(2^e)\times L^{2n}(2^e)).$$

The result (\ref{c1}) will follow from showing that $$(x-y)^{2^{t+2}B}\ne0\in ku^{2(2n-d)}(L^{2n}(2^e)\times L^{2n}(2^e))$$ with $n=3\cdot2^{t-1}-1+2^{t+1}B$
and $d=3\cdot2^t-2$. This group is isomorphic to the component group $G_d$ for $M_e$ whose presentation matrix was described in Section \ref{descripsec}.
The ordered set of generators is obtained as $x^{n-d}y^{n-d}$ multiplied by
\begin{equation}\label{gns}x^0y^d,\ldots,x^dy^0,ux^1y^d,\ldots,ux^dy^1,\ldots,u^dx^dy^d.\end{equation}
We omit the $x^{n-d}y^{n-d}$ throughout our analysis.

One easily shows that
$$\nu\binom{2^{t+2}B}j\begin{cases}=\a(B)&j=2^{t+1}B\\
>\a(B)&0<|2^{t+1}B-j|<2^{t+1}.\end{cases}$$
Here and throughout $\nu(-)$ denotes the exponent of 2 in an integer.
We wish to show that if $t<e$ and $d=3\cdot2^t-2$, then $2^{e+t-1}x^{d/2}y^{d/2}+2^{e+t}f(x,y)\ne0$ in $G_d$, where $f(x,y)$ is a polynomial of degree $d$ in $x$ and $y$.

In the reduced  matrix for $P_e$ we omit all columns and rows not of the form $3\cdot2^i-3$, $0\le i\le t$. Omitting columns amounts to taking a quotient,
and when a column(generator) is omitted the row(relation) with its leading entry can be omitted, too. The resulting matrix is presented below, where the various polynomials $q$ are mostly distinct.

\medskip
\begin{tabl}\label{qtbl}

\begin{center}
{\scalefont{.75}{
$\renewcommand\arraystretch{1.2}\begin{array}{c|ccccccc}
&0&3&9&3\cdot2^3-3&\cdots&3\cdot2^{t-1}-3&3\cdot2^t-3\\
\hline
0&2^e&2^{e-2}p_2&2^{e-3}q&2^{e-4}q&&2^{e-t}q&2^{e-t-1}q\\
3&&2^{e-1}&2^{e-3}p_2(x^2)&2^{e-4}q&&2^{e-t}q&2^{e-t-1}q\\
9&&&2^{e-2}&2^{e-4}p_2(x^4)&&2^{e-t}q&2^{e-t-1}q\\
3\cdot2^3-3&&&&2^{e-3}&&2^{e-t}q&2^{e-t-1}q\\
\vdots&&&&&\ddots&&\\
3\cdot2^{t-1}-3&&&&&&2^{e-t+1}&2^{e-t-1}p_2(x^{2^{t-1}})\\
3\cdot2^t-3&&&&&&&2^{e-t}
\end{array}$}}
\end{center}
\end{tabl}

\medskip
We temporarily ignore the polynomials $q$ and the polynomial $f(x,y)$. The first few relevant relations in the corresponding numerical matrix are
$x^{d/2}y^{d/2}$ times the following polynomials. We omit writing powers of $u$; they equal the degree of the written polynomial.

\medskip

{\scalefont{.75}{
$\renewcommand\arraystretch{1.2}\begin{array}{ccccccc}
2^e&+&2^{e-2}(xy^2+x^2y)&&&&\\
&&2^{e-1}xy^2&+&2^{e-3}(x^3y^6+x^5y^4)&&\\
&&2^{e-1}x^2y&+&2^{e-3}(x^4y^5+x^6y^3)&&\\
&&&&2^{e-2}x^3y^6&+&2^{e-4}(x^7y^{14}+x^{11}y^{10})\\
&&&&2^{e-2}x^4y^5&+&2^{e-4}(x^8y^{13}+x^{12}y^9)\\
&&&&2^{e-2}x^5y^4&+&2^{e-4}(x^9y^{12}+x^{13}y^8)\\
&&&&2^{e-2}x^6y^3&+&2^{e-4}(x^{10}y^{11}+x^{14}y^7).
\end{array}$}}

\medskip
From these relations, we obtain
\begin{eqnarray}2^{e+t-1}&\sim&-2^{e+t-3}(xy^2+x^2y)\label{first}\\
&\sim&2^{e+t-5}(x^3y^6+x^4y^5+x^5y^4+x^6y^3)\nonumber\\
&\sim&-2^{e+t-7}\sum_{i=7}^{14}x^iy^{21-i}\nonumber\\
&\sim&\cdots\nonumber\\
&\sim&\pm2^{e-t-1}\sum_{i=2^t-1}^{2^{t+1}-2}x^iy^{3\cdot2^t-3-i}\nonumber\\
&=&\pm2^{e-t-1}(x^{3\cdot2^{t-1}-2}y^{3\cdot2^{t-1}-1}+x^{3\cdot2^{t-1}-1}y^{3\cdot2^{t-1}-2})\nonumber\\
&\ne&0,\nonumber\end{eqnarray}
since maximum exponents are $3\cdot2^{t-1}-1$. That the last line is nonzero follows from the reduced form of the matrix $M_e$ of relations.

Now we incorporate the polynomials $q$ in the above matrix. We denote by $\cm_i$ a monomial or sum of monomials of degree $3\cdot2^i-3$, in $x$ and $y$.
At the first step of the above reduction sequence, we would have an additional $\sum_{i=2}^t2^{t+e-i-2}\cm_i$. At the second step, we add
\begin{equation}\label{added}\sum_{i=3}^t2^{t+e-i-3}\cm_i'.\end{equation}
We can incorporate the first monomials for $i\ge3$ into the second, and we replace $2^{t+e-4}\cm_2$ by $\sum_{i\ge3}2^{t+e-i-3}\cm''_i$
and incorporate these into (\ref{added}). The third step adds $\sum_{i=4}^t2^{t+e-i-4}\cm_i'''$. We incorporate (\ref{added}) into this for $i>3$, while the term in (\ref{added}) with
$i=3$ is equivalent to a sum which can also be incorporated. Continuing, we end with
$$\sum_{i=t}^t2^{t+e-i-t}\cm_t^{(t)}=2^{e-t}\cm_t^{(t)}=0,$$
so the $q$'s contribute nothing.

We easily see that incorporating $2^{e+t}f(x,y)$ also contributes nothing, since
$$2^{e+t}\cm\sim2^{e+t-2}\cm_1\sim2^{e+t-4}\cm_2\sim\cdots\sim2^{e+t-2t}\cm_t=0.$$

The proof of (\ref{c2}) is very similar. We want to show $(x-y)^{(2B-1)2^t}\ne0$ in $G_{3\cdot2^t-2}$ if $\a(B)=e+t$ and $1\le t<e$. For $(B-2)2^t<j<(B+1)2^t$, we have
$$\nu\binom{(2B-1)2^t}j\begin{cases}=\a(B)-1&\text{if }j=(B-1)2^t\text{ or }B\cdot2^t\\
>\a(B)-1&\text{other }j.\end{cases}$$
We have factored out $x^{n-d}y^{n-d}$ with $n-d=2^tB-2^{t+1}+1$. Our ordered set of generators is again (\ref{gns}), and our class now, mod higher 2-powers, is
$2^{e+t-1}(x^{2^t-1}y^{2^{t+1}-1}+x^{2^{t+1}-1}y^{2^t-1})$.
%We also now factor out $x^{2^t-1}y^{2^t-1}$, so that our class is $2^{e+t-1}x^0y^{2^t}+x^{2^t}y^0$.
Utilizing the relations similarly to (\ref{first}), we end with
\begin{eqnarray*}&&\pm2^{e+t-1}(x^{2^t-1}y^{2^{t+1}-1}+x^{2^{t+1}-1}y^{2^t-1})\sum_{i=2^t-1}^{2^{t+1}-2}x^iy^{3\cdot2^t-3-i}\\
&=&\pm2^{e+t-1}( x^{3\cdot2^{t}-3}y^{3\cdot2^{t}-2}+x^{3\cdot2^{t}-2}y^{3\cdot2^{t}-3})\ne0,\end{eqnarray*}
since $x^{d+1}=0=y^{d+1}$ (after factoring out $x^{n-d}y^{n-d}$).
%$$\pm2^{e-t-1}(y^{2^t}+x^{2^t})\sum_{i=2^t-1}^{2^{t+1}-2}x^iy^{3\cdot2^t-3-i}=\pm x^{2^{t+1}-2}y^{2^{t+1}-1}+x^{2^{t+1}-1}y^{2^{t+1}-2}\ne0.$$
\end{proof}

\medskip
\begin{proof}[Proof of (\ref{b2thm}).]
The proof is by induction on $t$. If $t=1$, the theorem follows from (\ref{c1}) with $m=4B+1$ if $m\equiv1$ mod 4, and from (\ref{c2}) with $m=2B$
if $m$ is even. If $m\equiv3$ mod 4, then $\a(m+1)\le\a(m)-1=e$, so the result follows from the case $t=0$ for $n=m+1$.

Now we assume that the result has been proved for all $t'<t$. If $m$ is odd, then $\a(m-1)=e+t-1$, so using the induction hypothesis in the middle step,
$$b(m+2^t-1,e)\ge b(m-1+2^{t-1}-1,e)\ge 2(m-1)-2^{t-1}\ge 2m-2^t.$$
If $\nu(m+2^t)=k$ with $1\le k\le t-2$, then, noting that $\nu(m)=k$, too,
\begin{eqnarray*}\a(m-2^k)&=&(t+e)-2^k+\nu(m\cdots(m-2^k+1))\\
&=&t+e-2^k+(2^k-1)=t+e-1.\end{eqnarray*}
Therefore
$$b(m+2^t-1,e)\ge b(m-2^k+2^{t-1}-1,e)\ge 2(m-2^k)-2^{t-1}\ge 2m-2^t.$$

If $\nu(m)\ge t$, let $m= 2^tB$ with $\a(B)=\a(m)=t+e$. By (\ref{c2}), we obtain $b(m+2^t-1,e)\ge 2m-2^t$, as desired.
If  $m=2^{t-1}+2^{t+1}B$ with $\a(B)=t+e-1$, then (\ref{c1}) is exactly the desired result.

Finally, if $m=3\cdot2^{t-1}+2^{t+1}A$ with $\a(A)=t+e-2$, then
\begin{eqnarray*}\a(m+2^{t-1})&=&\a(A+1)\\
&=&\a(A)+1-\nu(A+1)\\
&=&e+v\text{ with }v<t.\end{eqnarray*}
  Thus, by the induction hypothesis,
$$b(m+2^t-1,e)\ge b(m+2^{t-1}+2^v-1,e)\ge2(m+2^{t-1})-2^v\ge2m-2^t.$$

\end{proof}
\section{Proof of Theorem \ref{longthm}}\label{pfsec}
In this section we prove Theorem \ref{longthm}. Because it is a fairly complicated row reduction, we accompany the  proof with
diagrams of the matrix at several stages of the reduction. Although the proof of Theorem \ref{genthm} in Section \ref{gensec} is a complete proof and subsumes the much-longer proof
for $e=6$, we feel that the more explicit example renders the general proof more comprehensible, or perhaps unnecessary.

If $M$ is a Toeplitz matrix corresponding to a polynomial $p(x)$ as described in the preceding section, then the Toeplitz matrix corresponding to the polynomial $(1+\a x+\b
x^2)p(x)$
is obtained from $M$ by adding $\a$ times each row to the one below it and $\b$ times each row to the row  2 below it. This illustrates how row operations
on the matrix of polynomials correspond to row operations on the partitioned matrix of numbers.

Our matrices  now refer to the case $e=6$.
The initial partitioned matrix for $G_d$ could be considered as the matrix of numbers associated to the following matrix of polynomials, which has
$d+1$ columns and $2(d+1)$ rows.
\begin{equation}\renewcommand\arraystretch{1.5} \begin{pmatrix}\label{inl} 64&\tbinom{64}2x&\tbinom{64}3x^2&\tbinom{64}4x^3&\cdots\\
64&\tbinom{64}2&\tbinom{64}3&\tbinom{64}4&\cdots\\
0&64&\tbinom{64}2x&\tbinom{64}3x^2&\cdots\\
0&64&\tbinom{64}2&\tbinom{64}3&\cdots\\
0&0&64&\tbinom{64}2x&\cdots\\
0&0&64&\tbinom{64}2&\cdots\\
&&\vdots&&\end{pmatrix}\end{equation}
The first two row blocks of the associated matrices of numbers have $d+1$ rows of numbers, the next two $d$ rows, etc., while the sizes of the column blocks are
$d+1,d,\ldots$. The first (resp.~second) (resp.~third) row block corresponds to the first (resp.~second) (resp.~first) set of relations in (\ref{reln}) with $i+j=d$ (resp.~$d$)
(resp.~$d-1$).

Note that if the first two rows and the first column of (\ref{inl}) are deleted, we obtain exactly the initial matrix for $G_{d-1}$.
We may assume that the matrix for $G_{d-1}$ has already been reduced, to $Q_{d-1}$. Thus we may obtain the reduced form for $G_d$ by taking  $Q_{d-1}$,
placing a column  of 0's in front of it and the top two rows of (\ref{inl}) above that, and then reducing. By the nature of the matrix (\ref{inl}),
the restriction of the reduced form  $Q_d$ to its first $d$ columns will be  $Q_{d-1}$.

This is an interesting property. Let $Q_d$ denote the reduced form of the polynomial matrix for $G_d$. Remove its last column, put a column of 0's in front, put the top two rows
of (\ref{inl}) above this, and reduce. The result will be the original matrix, $Q_d$.
We will prove that the matrix  described in Theorem \ref{longthm} is correct
by removing its last column (the one with the 1 at the bottom), preceding the matrix by a column of 0's and this by the first two rows of (\ref{inl}), and seeing that
after reducing, we obtain the original matrix $Q_{94}$. Because of the initial shifting, each column is determined by the column which precedes it, together with the reduction steps,
which justifies the method of starting with the
putative answer, shifted.
This seems to be a rather remarkable proof.  However, the reduction is far from being a simple matter.

Now we describe the steps in the reduction.
We begin with the putative answer pushed one unit to the right and two units down, preceded by the first two rows of (\ref{inl}) and a column of 0's.
We often write $R_i$ and $C_j$ for row $i$ and column $j$.

{\bf Step 0}: Subtract $R_0$ from $R_1$, then divide $R_1$ by $(1-x)$, and then subtract $xR_1$ from $R_0$. These rows become
$$\renewcommand\arraystretch{1.5} \begin{pmatrix}64&0&-\binom{64}3x&-\binom{64}4xp_2&-\binom{64}5xp_3&\cdots&-\binom{64}{64}xp_{62}&0&\cdots\\
0&\binom{64}2&\binom{64}3p_2&\binom{64}4p_3&\binom{64}5p_4&\cdots&\binom{64}{64}p_{63}&0&\cdots
\end{pmatrix}$$

Divide $R_1$ by 63, which is the unit part of $\binom{64}2$.
We now have, in $R_0$ and $R_1$,
$$P_{i,j}=\begin{cases}64&i=j=0\\
32&i=j=1\\
0&i+j=1\\
u_j2^{6-\nu(j+1)}xp_{j-1}&i=0,\ 2\le j\le 63\\
u_j'2^{6-\nu(j+1)}p_j&i=1,\ 2\le j\le 63\\
0&0\le i\le 1,\ 64\le j\le94,
\end{cases}$$
where $u_j$ is the odd factor of $-\binom{64}{j+1}$, and $u_j'\equiv u_j$ mod 64.

Our goal is to reduce this matrix so that the first nonzero entry (which we often call the ``leading entry'') in $R_i$ is
\begin{equation}\label{goal}\begin{cases}64\text{ in }C_0&i=0\\
32\text{ in }C_1&i=1\\
16\text{ in }C_4&i=2\\
32\text{ in }C_{i-1}&3\le i\le4\\
8\text{ in }C_{10}&i=5\\
16\text{ in }C_{i-1}&6\le i\le10\\
4\text{ in }C_{22}&i=11\\
8\text{ in }C_{i-1}&12\le i\le22\\
2\text{ in }C_{46}&i=23\\
4\text{ in }C_{i-1}&24\le i\le46\\
1\text{ in }C_{94}&i=47\\
2\text{ in }C_{i-1}&48\le i\le 94.
\end{cases}\end{equation}

\noindent The above entries for $i=0$, 1, 2, 5, 11, 23, and 47 will be the only nonzero entry in their columns.
Then we rearrange rows. For $i=2$, 5, 11, 23, and 47, $R_i$ moves to position $2i$. For other values of $i>2$, $R_i$ moves to position $i-1$.
Then we are finished. The entries $P_{i,i}$ will be as stated in Theorem \ref{longthm}, and the matrix will be upper triangular
with nonzero entries above the diagonal less 2-divisible than the diagonal entry in their column.

Table \ref{starttbl} depicts
the first 22 columns of the matrix at the end of Step 0, except that we omit writing the odd factors in rows 0 and 1.

\ss
\bigskip
\begin{tabl}\label{starttbl}

\begin{center}
{\scalefont{.65}{
$\renewcommand\arraystretch{1.2}\begin{array}{r|ccccccccccccc}
&0&1&2&3&4&5&6&7&8&9&10&11&12\\
\hline
0&64&0&64x&16xp_2&64xp_3&32xp_4&64xp_5&8xp_6&64xp_7&32xp_8&64xp_9&16xp_{10}&64xp_{11}\\
1&&32&64p_2&16p_3&64p_4&32p_5&64p_6&8p_7&64p_8&32p_9&64p_{10}&16p_{11}&64p_{12}\\
2&&64&0&0&16xp_2&0&0&0&8xp_6&0&0&0&0\\
3&&&32&0&16p_3&0&0&0&8p_7&0&0&0&0\\
\hline
4&&&&32&0&0&0&0&0&8x^2p_2(x^2)&8xp_6&0&0\\
5&&&&&32&0&0&0&0&0&8x^2p_2(x^2)&0&0\\
6&&&&&&16&0&0&0&8p_3(x^2)&8xp_2(x^3)&0&0\\
7&&&&&&&16&0&0&0&8p_3(x^2)&0&0\\
\hline
8&&&&&&&&16&0&0&0&0&0\\
9&&&&&&&&&16&0&0&0&0\\
10&&&&&&&&&&16&0&0&0\\
11&&&&&&&&&&&16&0&0\\
12&&&&&&&&&&&&8&0\\
13&&&&&&&&&&&&&8
\end{array}$}}
\end{center}
\end{tabl}

\ss
{\scalefont{.6}{
$\renewcommand\arraystretch{1.3}\begin{array}{r|ccccccccccc}
&13&14&15&16&17&18&19&20&21\\
\hline
0&32xp_{12}&64xp_{13}&4xp_{14}&64xp_{15}&32xp_{16}&64xp_{17}&16xp_{18}&64xp_{19}&32xp_{20}\\
1&32p_{13}&64p_{14}&4p_{15}&64p_{16}&32p_{17}&64p_{18}&16p_{19}&64p_{20}&32p_{21}\\
2&0&0&0&4xp_{14}&0&0&0&0&4q\\
3&0&0&0&4p_{15}&0&0&0&0&4q\\
\hline
4&0&0&0&0&4x^2p_6(x^2)&4xp_3(x^4)p_6&0&4q&4q\\
5&0&0&0&0&0&4x^2p_6(x^2)&0&0&4q\\
6&0&0&0&0&4p_7(x^2)&4xp_2(x^3)p_3(x^4)&0&4q&4q\\
7&0&0&0&0&0&4p_7(x^2)&0&0&4q\\
\hline
8&0&0&0&0&0&0&4x^4p_2(x^4)&0&4q\\
9&0&0&0&0&0&0&0&4x^4p_2(x^4)&0\\
10&0&0&0&0&0&0&0&0&4x^4p_2(x^4)\\
11&0&0&0&0&0&0&0&0&0\\
\hline
12&0&0&0&0&0&0&4p_3(x^4)&0&4q\\
13&0&0&0&0&0&0&0&4p_3(x^4)&0\\
14&8&0&0&0&0&0&0&0&4p_3(x^4)\\
15&&8&0&0&0&0&0&0&0\\
16&&&8&0&0&0&0&0&0\\
17&&&&8&0&0&0&0&0\\
18&&&&&8&0&0&0&0\\
19&&&&&&8&0&0&0\\
20&&&&&&&8&0&0\\
21&&&&&&&&8&0\\
22&&&&&&&&&8
\end{array}$}}

\bigskip

%The reader may wonder: if we end with a certain column $j$, how do we know that it is the same $C_j$ that we started with and pushed over (and down).
%The answer is that really we are doing this one column at a time. If we have done  columns 0 to $d$, then we shift them and compute $C_{d+1}$.
%Whatever took place in columns $0$ through $d$ in this reduction is just what took place when we computed $C_d$.

Although it is just a simple shift, it will be useful to have for reference, in Tables \ref{T3p} and \ref{T2p}, the shifted versions of Tables \ref{T3} and \ref{T2}.
These are the relevant portions of the matrix at the outset of the reduction. The shifted version of part b of Theorem \ref{longthm}
can be mostly seen in Table \ref{starttbl}.

\bigskip
\begin{tabl}\label{T3p}

\begin{center}
{\scalefont{.83}{
$\renewcommand\arraystretch{1.3}\begin{array}{r|cccc|cccc|cccc|}
&19&20&21&22&35&36&37&38&67&68&69&70\\
\hline
&{}&&&&&&&&&&&\\
2&0&0&4q&4q&&&&&&&&\\
3&0&0&4q&4q&&&&&&&&\\
4&0&4q&4q&4q&&&&&&&&\\
5&0&0&4q&4q&\multicolumn{3}{c}{B\cdot\frac12p_3(x^8)}&&\multicolumn{3}{c}{B\cdot\frac14p_7(x^8)}&\\
6&0&4q&4q&4q&&&&&&&&\\
7&0&0&4q&4q&&&&&&&&\\
8&4x^4p_2(x^4)&0&4q&4q&&&&&&&&\\
9&0&\dig&0&4q&&&&&&&&\\
10&0&0&\dig&0&&&&&&&&\\
11&0&0&0&\digs&&&&&&&&\\
12&4p_3(x^4)&0&4q&4q&2p_7(x^4)&&&&p_{15}(x^4)&&&\\
13&0&\dig&0&4q&&\dig&&&&\dig&&\\
14&0&0&\dig&0&&&\dig&&&&\dig&\\
15&0&0&0&\digs&&&&\digs&&&&\digs
\end{array}$}}
\end{center}
\end{tabl}

\bigskip
\begin{tabl}\label{T2p}

\begin{center}
{\scalefont{.83}{
$$\renewcommand\arraystretch{1.3}\begin{array}{c|cccccccc}
&39&40&41&42&43&44&45&46\\
\hline
&{}&&&&&&&\\
2&0&0&2q&2q&2q&2q&2q&2q\\
3&0&0&2q&2q&2q&2q&2q&2q\\
4&0&2q&2q&2q&2q&2q&2q&2q\\
5&0&0&2q&2q&2q&2q&2q&2q\\
6&0&2q&2q&2q&2q&2q&2q&2q\\
7&0&0&2q&2q&2q&2q&2q&2q\\
8&0&0&2q&2q&2q&2q&2q&2q\\
9&0&0&0&2q&2q&2q&2q&2q\\
10&0&0&0&0&2q&2q&2q&2q\\
11&0&0&0&0&0&2q&2q&2q\\
12&0&0&2q&2q&2q&2q&2q&2q\\
13&0&0&0&2q&2q&2q&2q&2q\\
14&0&0&0&0&2q&2q&2q&2q\\
15&0&0&0&0&0&2q&2q&2q\\
16&2x^8p_2(x^8)&0&0&0&2q&0&2q&2q\\
17&0&\dig&0&0&0&2q&0&2q\\
18&0&0&\dig&0&0&0&2q&0\\
19&0&0&0&\dig&0&0&0&2q\\
20&0&0&0&0&\dig&0&0&0\\
21&0&0&0&0&0&\dig&0&0\\
22&0&0&0&0&0&0&\dig&0\\
23&0&0&0&0&0&0&0&\digs\\
24&2p_3(x^8)&0&0&0&2q&0&2q&2q\\
25&0&\dig&0&0&0&2q&0&2q\\
26&0&0&\dig&0&0&0&2q&0\\
27&0&0&0&\dig&0&0&0&2q\\
28&0&0&0&0&\dig&0&0&0\\
29&0&0&0&0&0&\dig&0&0\\
30&0&0&0&0&0&0&\dig&0\\
31&0&0&0&0&0&0&0&\digs
\end{array}$$}}
\end{center}
\end{tabl}

At any stage of the reduction, let $\rt_i$ denote $R_i$ with its leading entry changed to 0. The first nonzero entry of $\rt_i$ at the outset occurs in column
\begin{equation}\label{start}\begin{cases}i+5&4\le i\le5\\
i+3&6\le i\le7\\
i+11&8\le i\le11\\
i+7&12\le i\le15\\
i+23&16\le i\le23\\
i+15&24\le i\le31\\
i+47&32\le i\le47\\
i+31&48\le i\le63.\end{cases}\end{equation}
For $i\ge64$, $\rt_i$ has no nonzero elements.

The relationship between the three parts of Table \ref{T3p} and the similar relationship that columns 71 to 78 are mostly $\frac12p_3(x^{16})$ times
Table \ref{T2p} will be very important.  We call it a ``proportionality'' relation. We extend it to also include that in rows 4, 5, and 6
we have $C_{18}/C_{10}=\frac12 p_3(x^4)$, $C_{34}/C_{10}=\frac14p_7(x^4)$, and $C_{66}/C_{10}=\frac18p_{15}(x^4)$, and similarly in row 4, columns
9, 17, 33, and 65.
 When we perform row operations involving these rows,
these relationships continue to hold. Rows 12--15 and 24--31, where the proportionality relationship does not hold, will not be involved
in row operations, since the columns in which their leading entry occurs have all 0's above the leading entry. (Although rows 0 and 1 are initially nonzero in these
columns, clearing out $R_0$ and $R_1$, as in Step 1 below, is a 2-step process, and so $\rt_i$ for $12\le i\le15$ or $24\le i\le31$ will not be combining into
$R_0$ or $R_1$, either.)

%(There is a slight, but insignificant,
%exception in Step 1, where we attempt to clear out rows 0 and 1.)

In Steps 3, 6, 9, and 12, we will divide rows 2, 5, 11, and 23 by $x$, $x^2$, $x^4$, and $x^8$. It will be important that the entire rows are divisible
by these powers of $x$. We  keep track of bounds for the $x$-divisibility of the unspecified polynomials in Table \ref{T3p} and \ref{T2p} and in columns 79 to 94.
We postpone this analysis until all the reduction steps have been outlined.
%We will note that all polynomials $q$ in these tables are divisible by $x$, largely as a reminder of the
%more delicate $x$-divisibility to be considered later.
 Similarly to the proportionality considerations just discussed,  divisibility bounds
are preserved when we add a multiple of one row to another, in that the $x$-exponent of $P_{i,j}+cP_{i',j}$ is $\ge$ the minimum of that of $P_{i,j}$ and $P_{i',j}$. The rows, 3,
6--7, 12--15, 24--31, and 48--63, where entries not divisible by $x$ occur will not
be used to modify other rows.

Now we begin an attempt to remove most of the binomial coefficients from $R_0$ and $R_1$.

{\bf Step 1}. The goal is to add multiples of lower rows to $R_0$ and $R_1$ to reduce them to

{\scalefont{.83}{
$$\renewcommand\arraystretch{1.3}\begin{array}{c|cccccccccccccccc}
&0&1&2&3&4&5&6&7&\cdots&15&\cdots&31&\cdots&63&\cdots\\
\hline
0&64&0&0&16xp_2&0&0&0&8xp_6&0&4xp_{14}&0&2xp_{30}&0&xp_{62}&0\\
1&0&32&0&16p_3&0&0&0&8p_7&0&4p_{15}&0&2p_{31}&0&p_{63}&0
\end{array}$$}}

\ni with each 0 referring to all intervening columns. However, we will be forced to bring up some additional entries.
We claim that, after Step 1, the nonzero entries $P_{1,j}$, in addition to those in columns $2^t-1$ listed just above,
are combinations of various
$\rt_j$ with $j\ge5$ and $j$ not in $[6,8]\cup[12,16]\cup[24,32]\cup[48,64]$.
Row 0 is similar but has an extra power of $x$, since this is true at the outset. Rows 0 and 1 will thus have the requisite
proportionality and $x$-divisibility relations.

It will be useful to note that since at the outset all entries in $\rt_i$ for $i\ge2$ are a multiple of $\frac12$ times the leading entry at the bottom of their
column, then, using (\ref{start}),  $2\rt_i$ can be killed (reduced to all 0's) by subtracting multiples of lower rows if $i\ge 32$. For example,  nonzero entries of $\rt_{32}$ occur
only in $C_j$ with $j\ge79$. If the entry in $(32,j)$ is a polynomial $q$, then subtracting $qR_{j+1}$ from $2\rt_{32}$ kills the entry in $C_j$ without changing anything else, since
$\rt_{j+1}=0$ for such $j$.

Similarly $4\rt_i$ can be killed in two steps if $i\ge 16$, and $8\rt_i$ can be killed if $i\ge8$.
We can use this observation to kill the entries in $R_0$ and $R_1$ in many columns.

For example, if $32\le j\le46$, then the numerical coefficient in $P_{0,j}$ and $P_{1,j}$ is 0 mod 8, while there is a leading 4 in $(j+1,j)$. Subtracting multiples of $2R_{j+1}$
from $R_0$ and $R_1$ kills the entries in $(0,j)$ and $(1,j)$ while bringing up multiples of $2\rt_{j+1}$. This can be killed by the observation of the previous two paragraphs.
This method works to eliminate the entries in $R_0$ and $R_1$ in columns 12, 14, 16--18, 20--22, 24--30, and 32--62. (Initial entries in $R_0$ and $R_1$ in columns $>63$ were all
0.) Since $-\binom{64}{2^t}\equiv2^{6-t}$ mod $2^{13-2t}$ for $2\le t\le6$, the entries in $R_0$ and $R_1$ in columns 3, 7, 15,  31, and 63 can be changed to their desired
values with pure 2-power coefficients by similar steps.

For $C_{23}$, we subtract even multiples of $R_{24}$ from $R_0$ and $R_1$ to kill the entries. This brings into $R_0$ and $R_1$ multiples of 4 in some columns 39 to 46 and even
entries in some columns $\ge 71$. The latter entries can be cancelled from below, while cancelling multiples of 4 in $C_j$ for $39\le j\le 46$ brings up multiples of $\rt_{j+1}$.  A
very similar argument and similar conclusion works for removal of entries in $(0,19)$ and $(1,19)$.

 Now we consider $C_{11}$. We subtract multiples of $2R_{12}$ to kill the entries in $R_0$ and $R_1$. This brings up multiples of 8 in $C_{19}$,  $C_{21}$, and $C_{22}$,
  4 in  $C_{j}$ for $35\le j\le46$, and  2 in some columns $>64$, the latter of which can be cancelled from below. We kill the earlier elements with multiples of $R_{j+1}$, leaving a
  combination of the various $\rt_{j+1}$

 Column 9 is eliminated similarly, giving multiples of $\rt_{21}$, $\rt_{37}$, and some others, while columns 8,  10, and 13 are, in a sense, easier
 since their binomial coefficients are 4 times the number at the bottom of their column, rather than 2. For example, to kill the entry in $(1,13)$, we first subtract a multiple of
 $4R_{14}$. This contains a $16q$ in $C_{21}$, which is killed by a multiple of $2R_{22}$. This brings up a $2q'$ in $R_{45}$, the killing of which brings up a multiple of
 $\rt_{46}$.

 To kill the entry in $(1,5)$, we subtract a multiple of $2R_6$, which has entries in $C_j$ for many values of $j\ge9$.
 We can cancel each of these by subtracting a multiple of $R_{j+1}$, accounting for the contributions to $R_1$ of multiples of many $\rt_{k}$ with
 $k\not\in[6,8]\cup[12,16]\cup[24,32]\cup[48,64]$.
Killing the entries in $C_4$ and $C_6$ is similar.

Finally, to kill the entry in $(1,2)$, we subtract a multiple of $2R_3$. This brings up  entries in columns 4, 8, 16, 32, 64, and others, the killing of which brings
up combinations of $\rt_5$, $\rt_9$, $\rt_{17}$, and $\rt_{33}$, as allowed.

\ss

\bigskip
{\bf Step 2}. Subtract $2R_1$ from $R_2$ to remove the 64 in $P_{2,1}$. This brings entries into $R_2$ in columns
\begin{equation}\label{cols}j\in\{3,7,10,15,18,20\text{-}22, 31, 34, 36\text{-}38, 40\text{-}46\}\end{equation}
and others with $j\ge63$.
The entry brought into $C_j$ has numerical coefficient equal to $P_{j+1,j}$.
These are then killed by subtracting corresponding multiples of $R_{j+1}$, which brings up into $R_2$
corresponding multiples of $\rt_{j+1}$ for $j$ as in (\ref{cols}). From $\rt_4$, this will place $q=8x^2p_2(x^2)p_3$ in $C_9$, $\frac12p_3(x^4)q$ in $C_{17}$,
$\frac14p_7(x^4)q$ in $C_{33}$, and $\frac18p_{15}(x^4)q$ in $C_{65}$. This extends the proportionality property of columns 9, 17, 33, 65 to include also
row 2.

Now $R_2$ has $16xp_2$ as its leading entry, in column 4.

{\bf Step 3}. Divide $R_2$ by $xp_2$. Dividing by a polynomial $p$ of the form $1+\sum \a_ix^i$, such as $p_2$, is not a problem. If $M$ is a Toeplitz matrix corresponding to a
polynomial $q$, then the Toeplitz matrix corresponding to $q/p$ is obtained from $M$ by performing the row operations corresponding to finitely many of the terms of the power series
$1/p$.
Dividing by $x$ is more worrisome, and is the reason for much of our work. In this step it is not a big problem, but later, when we have to divide by $x^4$ and $x^8$, more care is
required, which will be handled in Theorem \ref{count} after all steps have been described.

We have the important relation \begin{equation}p_{2t}/p_2=p_t(x^2),\label{pp}\end{equation} which implies that the entries in $P_{2,j}$ for $j=8$, 16, 32, and 64 are now $8p_3(x^2)$, $4p_7(x^2)$, $2p_{15}(x^2)$,
and $p_{31}(x^2)$. The relation (\ref{pp}) and its variants will be used frequently without comment.
In $C_9$, we obtain
$$8x\frac{p_2(x^2)p_3}{p_2}=8xp_2(x^3)+16x^3/p_2.$$
We use $R_{10}$ to cancel the second term, at the expense of bringing up multiples of $x^3\rt_{10}$ into $R_2$. This satisfies proportionality properties, which continue to hold.

\ss
{\bf Step 4}. Subtract $p_3R_2$ from $R_3$ to change $P_{3,4}$ to 0. Since
\begin{equation}\label{peq}p_{2k+1}-p_3p_k(x^2)=-x^2p_{k-1}(x^2),\end{equation}
we obtain $-8x^2p_2(x^2)$ in $P_{3,8}$, $-4x^2p_6(x^2)$ in $P_{3,16}$, and similar expressions in $C_{32}$ and $C_{64}$. We can change the minus to a plus by adding a multiple of $R_9$,
$R_{17}$, etc. This brings up multiples of $\rt_9$, $\rt_{17}$, etc., into $R_3$, but these maintain proportionality and $x$-divisibility properties. Note that $x$-divisibility keeps
changing. For example, in Step 3, that of $R_2$ was decreased by 1, and now all that we can say is that the $x$-divisibility
of $R_3$ is at least the minimum of that of $R_2$ and its previous value for $R_3$. But this will be handled later.

For the convenience of the reader, we list here columns 0 through 10 at this stage of the reduction. Some of the specific polynomials
are not very important, and will later just be called $q$.

\ss
{\scalefont{.85}{
$\renewcommand\arraystretch{1.2}\begin{array}{r|ccccccccccc}
&0&1&2&3&4&5&6&7&8&9&10\\
\hline
0&64&0&0&16xp_2&0&0&0&8xp_6&0&0&8x^3p_2p_2(x^3)\\
1&&32&0&16p_3&0&0&0&8p_7&0&0&8x^2p_2(x^2)p_2(x^3)\\
2&&0&0&0&16&0&0&0&8p_3(x^2)&8xp_2(x^3)&8p_3p_3(x^2)\\
3&&&32&0&0&0&0&0&8x^2p_2(x^2)&8xp_6&8p_3(x^4)\\
\hline
4&&&&32&0&0&0&0&0&8x^2p_2(x^2)&8xp_6\\
5&&&&&32&0&0&0&0&0&8x^2p_2(x^2)\\
6&&&&&&16&0&0&0&8p_3(x^2)&8xp_2(x^3)\\
7&&&&&&&16&0&0&0&8p_3(x^2)\\
\hline
8&&&&&&&&16&0&0&0\\
9&&&&&&&&&16&0&0\\
10&&&&&&&&&&16&0\\
11&&&&&&&&&&&16\\
\end{array}$}}

\medskip
\ss
{\bf Step 5}. Subtract $2R_2$ from $R_5$ to remove the leading entry in $R_5$.  If $P_{2,j}=q$ for $j>4$, then adding $qR_{j+1}$ to $R_5$ will cancel the subtracted entry, at the
expense of adding $q\rt_{j+1}$ to $R_5$.
So $R_5$ gets multiples of $\rt_{j+1}$ for many values of $j$ in the intervals $[8,10]$, $[16,22]$, and $[32,46]$.
The rows that we don't want to bring up are 12--15, 24--31, etc., which contain the lower diagonals in Tables \ref{T3p} and \ref{T2p}, where neither proportionality nor
$x$-divisibility
holds.

\ss
Now the leading entry of $R_5$ is $8x^2p_2(x^2)$ in $C_{10}$.

{\bf Step 6}. Divide $R_5$ by $x^2p_2(x^2)$. We need to know that all entries in $R_5$ are divisible by $x^2$. In Theorem \ref{count}, we will show that this is true for
columns 19--22, 35--46, and 67--94. The only other nonzero entries in $R_5$ are those in columns 10, 18, 34, and 66 with which it started. See Table \ref{starttbl}.
The first nonzero entries in $R_5$ after dividing are $8$ in $C_{10}$ and $4p_3(x^4)$ in $C_{18}$.

\ss
{\bf Step 7}. Subtract multiples of $R_5$ from rows 0, 1, 2, 3, 4, 6, and 7 to clear out $C_{10}$ in these rows. Because it had been the case that
$P_{i,18}/P_{i,10}=\frac12p_3(x^4)$
for $0\le i\le 6$, we will now have $P_{i,18}=0$ for $i\in\{0,1,2,3,4,6\}$. Also, by (\ref{peq}), $P_{7,18}=4(p_7(x^2)-p_3(x^2)p_3(x^4))=-4x^4p_2(x^4)$. We can change the minus to a plus by adding
$x^4p_2(x^4)R_{19}$. Similarly, the only nonzero entries in column 34 (resp.~66) (except for $P_{j+1,j}$) are $2p_7(x^4)$ (resp.~$p_{15}(x^4)$) in $R_5$, and $2x^4p_6(x^4)$
(resp.~$x^4p_{14}(x^4)$) in $R_7$.
This illustrates why the proportionality relations are important.
\ss

{\bf Step 8}. Subtract $2R_5$ from $R_{11}$ to remove the leading entry in $R_{11}$.  Similarly to Step 5, if $P_{5,j}=q$ for $j>10$, then adding $qR_{j+1}$ to $R_{11}$ will cancel
the subtracted entry, at the expense of adding $q\rt_{j+1}$ to $R_{11}$.
So $R_{11}$ gets multiples of $\rt_{j+1}$ for many values of $j$ in the intervals $[18,22]$ and $[34,46]$.

\ss
The first 23 columns now are as below.

\ss
\ss
{\scalefont{.65}{
$\renewcommand\arraystretch{1.2}\begin{array}{r|ccccccccccccccc}
&0&1&2&3&4&5&6&7&8&9&10&11&12&13&14\\
\hline
0&64&0&0&16xp_2&0&0&0&8xp_6&0&0&0&0&0&0&0\\
1&&32&0&16p_3&0&0&0&8p_7&0&0&0&0&0&0&0\\
2&&0&0&0&16&0&0&0&8p_3(x^2)&8q_0&0&0&0&0&0\\
3&&&32&0&0&0&0&0&8x^2p_2(x^2)&8q_1&0&0&0&0&0\\
\hline
4&&&&32&0&0&0&0&0&8x^2p_2(x^2)&0&0&0&0&0\\
5&&&&&0&0&0&0&0&0&8&0&0&0&0\\
6&&&&&&16&0&0&0&8p_3(x^2)&0&0&0&0&0\\
7&&&&&&&16&0&0&0&0&0&0&0&0\\
\hline
8&&&&&&&&16&0&0&0&0&0&0&0\\
9&&&&&&&&&16&0&0&0&0&0&0\\
10&&&&&&&&&&16&0&0&0&0&0\\
11&&&&&&&&&&&0&0&0&0&0\\
12&&&&&&&&&&&&8&0&0&0\\
13&&&&&&&&&&&&&8&0&0\\
14&&&&&&&&&&&&&&8&0\\
15&&&&&&&&&&&&&&&8
\end{array}$}}

\ss
{\scalefont{.6}{
$\renewcommand\arraystretch{1.3}\begin{array}{r|cccccccccc}
&15&16&17&18&19&20&21&22\\
\hline
0&4xp_{14}&0&0&0&0&4q&4q&4q\\
1&4p_{15}&0&0&0&0&4q&4q&4q\\
2&0&4p_{7}(x^2)&8q_0p_3(x^4)&0&4q&4q&4q&4q\\
3&0&4x^2p_{6}(x^2)&8q_1p_3(x^4)&0&4q&4q&4q&4q\\
\hline
4&0&0&4x^2p_6(x^2)&0&0&4q&4q&4q\\
5&0&0&0&4p_3(x^4)&0&4q&4q&4q\\
6&0&0&4p_7(x^2)&0&0&4q&4q&4q\\
7&0&0&0&4x^4p_2(x^4)&0&4q&4q&4q\\
\hline
8&0&0&0&0&4x^4p_2(x^4)&0&4q&4q\\
9&0&0&0&0&0&4x^4p_2(x^4)&0&4q\\
10&0&0&0&0&0&0&4x^4p_2(x^4)&0\\
11&0&0&0&0&0&0&0&4x^4p_2(x^4)\\
\hline
12&0&0&0&0&4p_3(x^4)&0&4q&4q\\
13&0&0&0&0&0&4p_3(x^4)&0&4q\\
14&0&0&0&0&0&0&4p_3(x^4)&0\\
15&0&0&0&0&0&0&0&4p_3(x^4)\\
16&8&0&0&0&0&0&0&0\\
17&&8&0&0&0&0&0&0\\
18&&&8&0&0&0&0&0\\
19&&&&8&0&0&0&0\\
20&&&&&8&0&0&0\\
21&&&&&&8&0&0\\
22&&&&&&&8&0\\
23&&&&&&&&8
\end{array}$}}

\ss
In addition, we have, at this stage of the reduction:

\begin{enumerate}
\item[a.] 4 in $P_{j+1,j}$ for $23\le j\le46$, and 2 in $P_{j+1,j}$ for $47\le j\le94$. Other than that:
\item[b.] 0 in columns 23 to 30 and 47 to 62.
\item[c.] A pattern resembling that of columns 15 to 18  in columns 31 to 34 and 63 to 66.
\item[d.] Columns 35 to 38 (resp.~67 to 70) are $\frac12p_3(x^8)$ (resp.~$\frac14p_7(x^8)$) times columns 19 to 22, except that corresponding to the $4p_3(x^4)$ in rows 12 to 15
we have $2p_7(x^4)$ (resp.~$p_{15}(x^4)$).
\item[e.] Columns 39 to 46 resemble Table \ref{T2p}.  Columns 71 to 78 are $\frac12p_3(x^{16})$ times these,
except for the diagonal near the bottom, which is $p_7(x^8)$.
\item[f.] Columns 79 to 94 have a form similar to that of columns 39 to 46.
\item[g.] The $x$-divisibility in columns 19--22, 39--46, and 79--94  will be described in Theorem \ref{count} and its proof.
\end{enumerate}

\ss

{\bf Step 9}. Divide $R_{11}$ by $x^4p_2(x^4)$. We will show in Theorem \ref{count} that all entries in $R_{11}$ are divisible by $x^4$. The leading entry in row 11 is now a 4
in $C_{22}$.

\ss
{\bf Step 10}. Subtract multiples of $R_{11}$ from rows 0 to 10 and 12 to 15 to clear out their entries in $C_{22}$.  Similarly to Step 7, we now have that  $P_{i,38}=0$ except for
$P_{11,38}=2p_3(x^8)$, $P_{15,38}=2x^8p_2(x^8)$,
and $P_{39,38}=4$, with a similar situation in $C_{70}$. In particular, $P_{15,70}=x^8p_6(x^8)=\frac12p_3(x^{16})P_{15,38}$.

\ss
{\bf Step 11}. Subtract $2R_{11}$ from $R_{23}$, and, similarly to Steps 5 and 8, kill entries subtracted from $P_{23,j}$ for $j>22$ by adding multiples of $R_{j+1}$,
thus bringing up these multiples of $\rt_{j+1}$. The smallest such $j$ is 38, due to the entry in $(11,38)$ described in the previous step.

\ss
{\bf Step 12}. Now the leading entry of $R_{23}$ is $2x^8p_2(x^8)$ in $C_{46}$. (This can be seen using (\ref{start}) and that there have been no other changes to
$R_{23}$ in columns less than 62.) Divide $R_{23}$ by $x^8p_2(x^8)$. We will show later that all entries in $R_{23}$ are divisible by $x^8$ at this stage.

\ss
{\bf Step 13}. Subtract multiples of $R_{23}$ from rows 0 to 22 and 24 to 31 to make their entries in $C_{46}$ equal to 0. Similarly to Step 10, this will cause
$P_{i,78}=0$ except for $P_{23,78}=p_3(x^{16})$, $P_{31,78}=x^{16}p_2(x^{16})$,
and $P_{79,78}=2$.

\ss
{\bf Step 14}. Subtract $2R_{23}$ from $R_{47}$. This will add multiples of 2 to $R_{47}$ in some columns $j\ge78$. These can be removed, without any other effect, by subtracting a
multiple of $R_{j+1}$. Now $R_{47}$ has leading entry $x^{16}p_2(x^{16})$ in $C_{94}$. Divide $R_{47}$ by $x^{16}p_2(x^{16})$, and then subtract multiples of $R_{47}$ from the others
to clear out $C_{94}$.

\ss
{\bf Step 15}. We are now in the situation described in the paragraph containing (\ref{goal}). Rearrange rows as specified there, and we are done.

\bigskip
It remains to show that Steps 3, 6, 9, and 12 above could actually be carried out, by showing that there was sufficient divisibility by $x$.
 This will follow from Theorem \ref{count}.

\begin{defin} Let $\D(0)=3$ and $\D(1)=2$. For $i\ge2$, let $b(i)$ denote the largest integer $\le i$ of the form $2^t-1$ or $3\cdot2^t-1$, and let $\D(i)=i-b(i)$.
\end{defin}

For example, the values of $\D(i)$ for $2\le i\le17$ are as in the following table.

$\begin{array}{c|cccccccccccccccc}
i&2&3&4&5&6&7&8&9&10&11&12&13&14&15&16&17\\
\hline
\D(i)&0&0&1&0&1&0&1&2&3&0&1&2&3&0&1&2
\end{array}$

\ss
\begin{thm} \label{count}Let $\nu(i,j)$ denote the exponent of $x$ in $P_{i,j}$ at any stage of the reduction from the end of Step 1 to the end of Step 14. Then
\begin{itemize}
\item If $19\le j\le22$ and $0\le i\le j-8$, then $\nu(i,j)\ge22-j+\D(i)$.
\item If $39\le j\le 46$ and $0\le i\le j-16$, then $\nu(i,j)\ge46-j+\D(i)$.
\item If $79\le j\le 94$ and $0\le i\le j-32$, then $\nu(i,j)\ge94-j+\D(i)$.
\end{itemize}
\end{thm}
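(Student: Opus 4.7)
The plan is to prove Theorem \ref{count} by induction on the step number, tracking the $x$-divisibility of each entry through every row operation and division. The underlying principle is that an operation $R_i\leftarrow R_i+f(x)R_k$ gives $\nu(i,j)\ge\min(\nu_{\text{old}}(i,j),\nu(f)+\nu(k,j))$, so the claimed bound propagates automatically provided both the old row and the incoming multiple satisfy it. The subtlety comes from the four divisions in Steps 3, 6, 9, 12, where $R_2$, $R_5$, $R_{11}$, $R_{23}$ are divided by $x$, $x^2$, $x^4$, $x^8$ respectively, reducing the $x$-divisibility of those rows by $1,2,4,8$. The function $\Delta$ is calibrated precisely for this: since $\Delta(2)=\Delta(5)=\Delta(11)=\Delta(23)=0$, the bound after each division is still tight, and the indices $i$ into which the lost $x$-divisibility later propagates through the clearing steps are exactly those with $\Delta(i)>0$, which grows linearly with the distance from the most recent pivot row $b(i)$.

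I would start by verifying the claim directly at the end of Step 1. In the three column ranges, the nonzero entries before Step 1 are either the terms shown in Tables \ref{T3p} and \ref{T2p}, which sit on $x^{2^s}p_2(x^{2^s})$-diagonals starting at row $2^{s+1}-2$ and whose $x$-divisibility $2^s$ matches the combinatorics of $\Delta$, or the diagonal pivots $P_{j+1,j}$. In Step 1, the new nonzero entries in $R_0$ and $R_1$ are assembled as polynomial multiples of $\rt_{j+1}$ for $j+1$ outside the pivot-rich intervals $[6,8]\cup[12,16]\cup[24,32]\cup[48,64]$; since those $\rt_{j+1}$ carry sufficient $x$-divisibility and the special values $\Delta(0)=3$, $\Delta(1)=2$ grant the most slack, the bounds in rows $0$ and $1$ follow. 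Rows $2$ and $3$ after Step 2 are checked analogously, using proportionality-preservation of $x$-divisibility along the rows of Tables \ref{T3p} and \ref{T2p}.

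The remaining steps I would handle uniformly. For the divisions in Steps 3, 6, 9, 12, the inductive hypothesis from the previous stage asserts precisely that every entry of $R_r$ is $x^{2^s}$-divisible, legitimizing the division and leaving $\nu(r,j)$ equal to $22-j+\Delta(r)$ (and analogously for the other two ranges) since $\Delta(r)=0$. For the clearing Steps 7, 10, 13, subtracting $c(x)R_r$ from $R_i$ with $c(x)=P_{i,\text{piv}}/P_{r,\text{piv}}$ produces incoming entries with $x$-divisibility $\nu(c)+\nu(r,j)$, and $\nu(c)$ records exactly the distance of $i$ from the pivot row, which is precisely the mechanism generating the linear growth of $\Delta(i)$ within each block determined by $b(i)$.

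The main obstacle I expect is Steps 5, 8, 11, the subtractions $R_{r'}\leftarrow R_{r'}-2R_r$ into the next pivot row (for instance $R_5\leftarrow R_5-2R_2$): these inject pre-division entries of $R_r$ into a row that will itself later be divided by a higher $x$-power. One must verify case by case that the $\rt_{j+1}$-contributions produced when clearing the artifacts of these subtractions carry enough initial $x$-divisibility to survive the subsequent division of $R_{r'}$ by $x^{2^{s+1}}$ and still obey the $\Delta$-shifted bound. A case split on which subinterval $j$ lies in (the three ranges $[19,22]$, $[39,46]$, $[79,94]$ and the corresponding pivot-free subblocks within each), together with the observation that the $x$-divisibility of the entry on the initial $x^{2^s}p_2(x^{2^s})$-diagonal in column $j$ is exactly $22-j+\Delta(i)$ in the first range (and similarly in the others), shows that the bound is self-replicating under all the operations performed in Steps 1 through 14.
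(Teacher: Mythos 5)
Your proposal has the right general flavor — tracking $x$-divisibility through the reduction steps and using the additivity of $x$-exponents under row operations — but it misses the essential structural idea of the paper's proof and, as a consequence, has a gap it cannot close.

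The critical omission is the \emph{induction on the column index $j$}. You say you would ``verify the claim directly at the end of Step 1,'' asserting that the only nonzero entries in the relevant columns at that stage are the explicit $x^{2^s}p_2(x^{2^s})$-diagonals from Tables \ref{T3p}, \ref{T2p} and the leading pivots. This is not the case: those tables are dominated by entries labelled $q$, which are \emph{unspecified} polynomials produced by the reduction in lower gradings, and their $x$-divisibility is precisely what Theorem \ref{count} is asserting. There is no way to ``check directly'' that these $q$'s are sufficiently $x$-divisible, because they are not written down anywhere. The paper resolves this circularity by observing that the outset of the reduction (after Steps 0--1) is the putative reduced matrix shifted down and to the right; hence the entries in column $j$ at the outset are the entries in column $j-1$ at the end of Step 14, after the row rearrangement. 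This turns the claim into an induction on $j$ with base case $j=79$ (where column 78 of Theorem \ref{longthm}(e) has only two explicit nonzero non-leading entries). Concretely, the paper introduces $\mu(i)=\nu(i,j)-(94-j)$, reads off $\mu_0(i)\ge\D(i-1)+1$ for $i\ge4$ (and $\mu_0(2)\ge4$, $\mu_0(3)\ge3$) from the induction hypothesis on $j-1$, and then shows $\mu_{14}(i)\ge\D(i)$ by walking through Steps 1--14. Without this $j$-induction you have no base case for your step-induction, so the argument cannot start. You would need to add this mechanism, and with it the careful case analysis of which $\rt_{j+1}$ get brought into rows $0$, $1$, $2$, $5$, $11$, $23$ (the paper's key observation that rows $2^t$ and $3\cdot2^t$, which have $\mu<2^s$ at the relevant moments, are never among them).
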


Since this applies to any stage of the reduction, it says that all $x$-exponents in these columns are nonnegative at the end of Steps 3, 6, 9, and 12,
which means that there was enough $x$-divisibility to perform the step. The divisibility of other columns in rows 2, 5, 11, and 23 at Steps 3, 6, 9, and 12 is easily
checked, mostly following from proportionality.

\begin{proof} We give the proof for $79\le j\le94$. The proof for the smaller ranges is basically the same.
The proof is by induction on $j$. By Theorem \ref{longthm}(e) shifted, at the outset $\nu(32,79)=16$, while $\nu(i,79)=\infty$ for $i\ne32$ and $i\le47$.
If $j\ge80$, we assume the result is known for $j-1$.
With the rearranging and shifting, we start with, for $i\ge2$, $$\nu(i,j)=\begin{cases}\nu_E(i-1,j-1)&i\not\in\{2,3, 6, 12, 24, 48\}\\
\nu_E(\frac12i-1,j-1) &i\in\{6,12,24,48\}\\
\nu_E(i-2,j-1)&i\in\{2,3\},\end{cases}$$
where $\nu_E(-,-)$ refers to the value of $\nu$ at the end of Step 14. By the induction hypothesis, this is
$$\ge\begin{cases}94-j+1+\D(i-1)&i\not\in\{2,3,6,12,24,48\}\\ 94-j+1+\D(\frac12i-1)=94-j+1+\D(i-1)&i\in\{6,12,24,48\}\\
94-j+1+5-i&i\in\{2,3\}.\end{cases}$$
Let $\mu(i,j)$ denote a lower bound for $\nu(i,j)-(94-j)$. At the outset, we have, for all $i\ge4$ and $j\ge79$,
$$\mu(i,j)\ge \D(i-1)+1,$$
while $\mu(2,j)\ge 4$ and $\mu(3,j)\ge3$.

We will go through the steps of the reduction and see how $\mu$ changes.  We can dispense with $j$ as part of the notation.
We will now call it $\mu(i)$. To emphasize that $\mu$ is changing, we will let $\mu_k$ denote the value of $\mu$ after Step $k$.
We have $\mu_0(i)\ge\D(i-1)+1$ for $i\ge4$, $\mu_0(2)\ge4$ and $\mu_0(3)\ge3$. Although it is possible that actual divisibility could
increase after a step (by having terms of smallest exponent cancel), our lower bounds, being just bounds, cannot see this.
Thus we always have $\mu_{k+1}(i)\le\mu_k(i)$, so we wish to prove that $\mu_{14}(i)\ge \D(i)$.

Step 1 sets \begin{eqnarray}\label{mu1}\mu_1(1)&\ge&\min(\mu_0(5),\mu_0(9),\mu_0(17),\mu_0(33),\mu_0(10),\\
&&\mu_0(18),\mu_0(34),\mu_0(20),\mu_0(36),\mu_0(40))=2\nonumber\end{eqnarray}
and $\mu_1(0)=\mu_1(1)+1\ge3$. Of course, $\mu_1(i)=\mu_0(i)$ for $i>1$, since Step 1 is only changing $R_0$ and $R_1$. In asserting (\ref{mu1}), it is relevant that
the various $\rt_i$ which affect $R_1$ do not include $i=2^t$ or $3\cdot2^t$, since those are the only $i$ for which $\mu_0(i)=1$.

Step 2 sets
$$\mu_2(2)\ge\min(\mu_1(2),\mu_1(4),\mu_1(8),\mu_1(16),\mu_1(32))\ge1.$$
Other rows that affect $R_2$ would contribute exponents at least this large.
Step 3 subtracts 1 from $\mu_2(2)$, so now $\mu_3(2)\ge0$. Step 4 sets
$$\mu_4(3)\ge\min(\mu_3(3),\mu_3(2))\ge0.$$

We have $\mu_4(5)=\mu_0(5)\ge2$. Step 5 does not change this estimate, i.e., $\mu_5(5)\ge2$, because at Step 5, $R_5$ is not affected by any of the rows, $i=2^t$ with $t\ge1$ or
$i=3\cdot2^t$
with $t\ge0$, for which $\mu_4(i)<2$. This is due to the fact that, for these values of $i$, $\rt_k$ is 0 in $C_{i-1}$ throughout the reduction for all $k\ge2$.
Step 6 subtracts 2 from $\mu(5)$, so now $\mu_6(5)\ge0$.

For Step 7, we need to know the $x$-exponents of the entries in $C_{10}$  at this stage of the reduction. These exponents in row $i$ will be 3, 2, 0, 0, 1, 1, 0 for
$i=0$, 1, 2, 3, 4, 6, and 7. These can be seen in the table at the end of Step 4, or by noting that the entries in rows  4, 6, and 7 will be unchanged from their values in Table \ref{starttbl},
while
$R_1$ got $x^2$ from $\rt_5$ at Step 1, $R_2$ got $x$ from $\rt_4$ at Step 2, then changed to $x^0$ at Step 3, while $R_3$ then got $x^0$ at Step 4. For these values of $i$, we
obtain that $\mu_7(i)$ is $\ge$ the minimum of $\mu_6(i)$ and the exponent listed above. It turns out that the only change is $\mu_7(7)\ge0$. Our bounds now for
$i$ from 0 to 7 are 3, 2, 0, 0, 1, 0, 1, 0.

Since $\mu_7(11)\ge4$ and $\mu_7(i)\ge4$ for $19\le i\le23$ and $35\le i\le47$, we obtain $\mu_8(11)\ge4$, and then $\mu_9(11)\ge0$. For Step 10, we need to know exponent bounds in
$C_{22}$ at this stage of the reduction, because it is these multiples of $R_{11}$ that are being subtracted from the row in question. For $i<15$, they will be the same as the
$\mu$-values that we are computing here, because the same steps apply. However, we have $\mu_{10}(15)=0$
due to the $4p_3(x^4)$-entry in $P_{15,22}$. Our exponent bounds $\mu_{10}(i)$ now for $i$ from 0 to 15 are 3, 2, 0, 0, 1, 0, 1, 0, 1, 2, 3, 0, 1, 2, 3, 0.

Since $\mu_{10}(23)\ge8$ and $\mu_{10}(i)\ge8$ for $39\le i\le47$, we obtain $\mu_{11}(23)\ge8$, and then $\mu_{12}(23)\ge0$. For Step 13, we need to know exponent bounds in
$C_{46}$ at this stage of the reduction, because it is these multiples of $R_{23}$ that are being subtracted from the row in question. For $i<31$, they will be the same as the
$\mu$-values that we are computing here, because the same steps apply. However, we have $\mu_{13}(31)=0$
due to the $2p_3(x^8)$-entry in $P_{31,46}$.

In Step 14, we obtain $\mu_{14}(47)=0$, with no other changes to $\mu$. Our final values for $\mu_{14}(i)$ are 0 for $i=2$, 3, 5, 7, 11, 15, 23, 31, and 47, and increasing in
increments
of 1 from one of these to the next. This equals $\D(i)$, as claimed.
\end{proof}

\bigskip

\section{Proof of Theorem \ref{genthm}}\label{gensec}
 In this section,  we prove Theorem \ref{genthm} by defining a sequence of matrices $N_0,\ldots,N_{e-1}$ at various stages of the reduction, and then show that $N_s$ reduces to $N_{s+1}$. After  its rows are
rearranged,
$N_{e-1}$ will become the matrix described in Theorem \ref{genthm}. We explain in Theorem \ref{steps} how $N_0$ is obtained from $N_{e-1}$.
Comparing with the case $e=6$, $N_0$ through $N_4$ are the matrix after Steps 1, 4, 7, 10, and 13,  respectively, while $N_5$ is the matrix at the end of Step 14
except that $P_{95,94}$ has not yet been cleared out.

In the following, $\lg(-)$ denotes $[\log_2(-)]$, and $\delta_{i,j}$ is the usual Kronecker symbol. We continue to suppress $e$ from the notation.
\begin{defin}\label{Ndef}
For $0\le s\le e-1$, $N_s$ is a matrix with rows numbered from $0$ to $3\cdot2^{e-1}-1$, and columns from $0$ to $3\cdot2^{e-1}-2$ satisfying
\begin{enumerate}
\item[a.]
Its leading entries are
\begin{itemize}
\item $2^e$ in $(0,0)$ and $2^{e-1}$ in $(1,1)$;
\item for $0\le k\le e-1$, $2^{e-k}$ in $(i,i-1)$ for
$$3\cdot2^{k-1}\le i\le3\cdot2^k-\begin{cases}2&k\le s-1\\
1&k\ge s;\end{cases}$$
\item for $1\le\ell\le s$, $2^{e-\ell-1}$ in $(3\cdot2^{\ell-1}-1,3\cdot2^\ell-2)$.
\end{itemize}

\item[b.] For  $2\le\ell+2\le t\le e$, it has
\begin{itemize}
\item $2^{e-t}x^{2^\ell}p_{2^{t-\ell}-2}(x^{2^\ell})$ in $(2^{\ell+1}+m-1-\delta_{\ell+m,0},2^t+2^\ell-2+m)$ for
$$\begin{cases}0\le m\le 2^{\ell}-1&\ell<s\\
0\le m\le2^{\ell}+0&\ell=s\\
1\le m\le2^{\ell}+0&\ell>s;\end{cases}$$
\item $2^{e-t}p_{2^{t-\ell}-1}(x^{2^\ell})$ in $(3\cdot2^\ell+m-1,2^t+2^\ell-2+m)$ for
$$1\le m\le 2^{\ell}+\begin{cases}-1&\ell<s\\ 0&\ell\ge s,\end{cases}$$
and in $(\lceil 3\cdot2^{\ell-1}\rceil-1,2^t+2^\ell-2)$ if $\ell\le s$.
\end{itemize}

\item[c.] Except for the leading entries described in (a),
\begin{itemize}
\item all entries in $C_j$ are $0$ for $3\cdot2^k-1\le j\le 4\cdot2^k-2$, $k\ge0$, as are those
in $C_{3\cdot2^k-2}$ if $k<s$, while the only additional nonzero entry in $C_{3\cdot2^s-2}$ is $2^{e-s-1}$ in row $3\cdot2^{s-1}-1$;
\item if $t\ge2$ and $j=2^t+d$ with $-1\le d\le2^{t-1}-2$, then $P_{i,j}=0$ if $i\ge d+2^{\lg(d+1.5)+1}+2$;
\item if $k\ge0$ and $i=3\cdot2^k-1$, then $P_{i,j}=0$ for $i\le j<2i$.
\end{itemize}

\item[d.] For $3\le t<u\le e$, $2^{t-2}-1\le d\le 2^{t-1}-2$, and $i\le d+2^{t-1}$,
$$P_{i,2^u+d}=\tfrac1{2^{u-t}}p_{2^{u-t+1}-1}(x^{2^{t-1}})P_{i,2^t+d}.$$
This is also true for $d=2^{t-2}-2$ if $s\ge t-2$, except in row $3\cdot2^{t-3}-1$.
\item[e.] If $2\le t\le e-1$, $3\cdot2^t-2^{t-1}-1\le j\le 3\cdot2^t-2$, and $i\le j-2^t$, then $P_{i,j}$ is divisible by $x^\nu$ with $\nu= 3\cdot2^t-2-j+\eta_s(i)$, where
$$\eta_0(i)=\begin{cases}3-i&0\le i\le1\\6-i&2\le i\le 3\\ i-c(i)+1&i\ge4,\end{cases}$$
with $c(i)$  the largest integer $\le i$ of the form $2^v$ or $3\cdot2^v$, and
$$\eta_s(i)=\begin{cases}0&\text{if }i+1=3\cdot2^v\text{ or }4\cdot2^v\text{ for }0\le v<s\\
\eta_0(i)&\text{otherwise.}\end{cases}$$
\end{enumerate}
\end{defin}

 Theorem \ref{genthm} is an immediate consequence of the following result, together with the discussion preceding Step 0 of Section \ref{pfsec}.
\begin{thm} \label{steps} Let $N_s$ denote the matrices of Definition \ref{Ndef}.
\begin{enumerate}
\item[1.] After subtracting $2R_{3\cdot2^{e-2}-1}$ from $R_{3\cdot2^{e-1}-1}$ and then rearranging rows, $N_{e-1}$ satisfies the properties of Theorem \ref{genthm}. Call this
rearranged  matrix $Q$. The rearranging is that for $i=3\cdot2^t-1$ with $0\le t\le e-2$, $R_i$ moves to position $2i$, while for other values of $i>2$, $R_i$ moves to
position $i-1$.
\item[2.] Delete the last column of $Q$, precede this by a column of $0$'s, and precede this by the following two rows.

{\scalefont{.83}{
$$\renewcommand\arraystretch{1.3}\begin{array}{c|ccccccccc}
&0&1&2&3&&2^e-1&2^e&&3\cdot 2^{e-1}-2\\
\hline
0&2^e&\binom{2^e}2x&\binom{2^e}3x^2&\binom{2^e}4x^3&\cdots&x^{2^e-1}&0&\ldots&0\\
1&2^e&\binom{2^e}2&\binom{2^e}3&\binom{2^e}4&\cdots&1&0&\ldots&0
\end{array}$$}}

\noindent Then perform the $2^e$-analogues of Steps 0 and 1 of Section \ref{pfsec}. The result is the matrix $N_0$.
\item[3.] For $0\le s\le e-2$, the matrix $N_s$ reduces to $N_{s +1}$.
\end{enumerate}
\end{thm}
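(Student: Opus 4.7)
The plan to verify Theorem \ref{steps} is to treat the three parts in sequence, with the bulk of the work in part (3), which I prove by induction on $s$.

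Part (1) is pure bookkeeping. After the shuffle $i \mapsto 2i$ for $i = 3\cdot 2^t - 1$ and $i \mapsto i-1$ otherwise, the leading entries of Definition \ref{Ndef}(a) form the diagonal of Theorem \ref{genthm}; the family (b) produces parts (i) and (ii); the proportionality relation (d) produces the $p_{2^{v+1}-1}(x^{2^{t-1}})$-multiples comprising part (iii). The single subtraction $R_{3\cdot 2^{e-1}-1} \mapsto R_{3\cdot 2^{e-1}-1} - 2R_{3\cdot 2^{e-2}-1}$ detaches the final diagonal $1$ from its last off-diagonal partner.

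Part (2) reproduces Steps 0 and 1 of Section \ref{pfsec} with $64$ replaced by $2^e$. Step 0 normalizes $R_0$ and $R_1$ via $R_1 \mapsto (R_1 - R_0)/(1-x)$ followed by $R_0 \mapsto R_0 - xR_1$, after which the odd part of $-\binom{2^e}{j+1}$ divides out to leave pure $2^{e-\nu(j+1)}$-coefficients multiplying $xp_{j-1}$ or $p_j$. Step 1 then uses the cascade observation---that $2^{e-k-1}\rt_i$ can be cleared to zero by successively subtracting $R_{j+1}$ in each nonzero column $j$ of $\rt_i$, provided $i \ge 2^{k+1}$---to kill every entry of $R_0$ and $R_1$ except the $2^{e-\nu(j+1)}$-entries in columns $j = 2^t - 1$. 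The cascades deposit compensating terms into $R_0$ and $R_1$ that respect Definition \ref{Ndef}(b)--(e), because they land only in allowable row indices (never in the forbidden bands $[3\cdot 2^k, 4\cdot 2^k]\cup[4\cdot 2^k+1, 2\cdot 2^{k+1}]$).

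Part (3) is the heart of the proof. The reduction $N_s \to N_{s+1}$ is the three-substep block centered on row $i_s := 3 \cdot 2^s - 1$, generalizing Steps 2--4 (for $s=0$), 5--7 (for $s=1$), and so on. First, subtract $2R_{i_{s-1}}$ from $R_{i_s}$ (with $R_{i_{-1}} := R_1$) and cancel the induced nonzero entries in $R_{i_s}$ by subtracting multiples of $R_{j+1}$ in each affected column. Second, divide $R_{i_s}$ by $x^{2^s} p_2(x^{2^s})$; the $p_2$-division is formal, while the $x^{2^s}$-division is the substantive step. Third, subtract multiples of $R_{i_s}$ from the other rows to clear column $3 \cdot 2^{s+1} - 2$; the proportionality relation (d) propagates this clearing through columns $2^u + 3 \cdot 2^s - 2$ for $u > s+1$ automatically, while the identity (\ref{peq}) applied at scale $2^{s+1}$ generates the new $x^{2^{s+1}} p_2(x^{2^{s+1}})$-entries required by the level-$(s+1)$ instance of Definition \ref{Ndef}(b). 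Sign corrections are handled by adding multiples of $R_{k+1}$ for the appropriate columns.

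The main obstacle is the divisibility estimate Definition \ref{Ndef}(e): the $x^{2^s}$-division in substep two must be legal, and $\eta_{s+1}$ must emerge with the prescribed form. This is the direct generalization of Theorem \ref{count}, proved by defining lower bounds $\mu_k(i)$ for the $x$-exponent of $P_{i,j}$ and tracking their evolution through each substep of the block (each subtraction preserves the minimum, each division subtracts the divided power). The recursive shape of $\eta_s$---exceptional positions $i+1 = 3 \cdot 2^v$ or $4 \cdot 2^v$ for $v<s$ contribute $0$---records exactly that earlier reduction blocks have already installed $x^0$ leading entries at those rows; the current block installs two new exceptional positions at $i+1 = 3 \cdot 2^s$ and $4 \cdot 2^s$, which completes the recursion $\eta_s \to \eta_{s+1}$.
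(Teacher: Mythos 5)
Your outline follows the paper's proof essentially verbatim: Part 1 is bookkeeping on the row permutation, Part 2 runs the $2^e$-analogues of Steps 0--1 of Section \ref{pfsec} with the same cascade argument for killing $R_0$ and $R_1$, and Part 3 is the three-substep cycle (subtract $2R_{3\cdot 2^{s-1}-1}$, divide $R_{3\cdot 2^s-1}$ by $x^{2^s}p_2(x^{2^s})$, then clear column $3\cdot 2^{s+1}-2$ using proportionality and the identity (\ref{peq}) rescaled), with the $x$-divisibility bookkeeping controlled by the generalization of Theorem \ref{count} through $\eta_s$. Two small slips are worth correcting. In Part 3, the proportionality (d) applied to the cleared column $3\cdot 2^{s+1}-2 = 2^{s+2}+(2^{s+1}-2)$ propagates the zeroing to columns $2^u + 2^{s+1}-2$ for $u > s+2$, not to columns $2^u + 3\cdot 2^s - 2$ for $u > s+1$ as you wrote; and the identity (\ref{peq}) is applied after the substitution $x \mapsto x^{2^s}$ (producing $x^{2^{s+1}}p_{2^{u-s-1}-2}(x^{2^{s+1}})$, which is only $x^{2^{s+1}}p_2(x^{2^{s+1}})$ in the smallest case $u=s+3$). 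In Part 2, your second interval $[4\cdot 2^k+1, 2\cdot 2^{k+1}]$ is empty since $2\cdot 2^{k+1}=4\cdot 2^k$; the excluded rows are exactly those in $[3\cdot 2^k, 4\cdot 2^k]$.
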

\begin{proof}
Part 1 is straightforward but tedious and mostly omitted.  As an example of the comparison, the final case of the second $\bullet$ of Definition
\ref{Ndef}(b), after rearranging and changing $t$ to $T$, says
$$P_{3\cdot2^\ell-2,2^T+2^\ell-2}=2^{e-T}p_{2^{T-\ell}-1}(x^{2^\ell}).$$
With $\ell=s$ and $T=s+t+1$, this becomes the case $i=3\cdot2^s-2$ of Theorem \ref{genthm}(i).

Next we address Part 2.
 After shifting and performing Step 0 of Section \ref{pfsec}, we will have the $2^e$ analogue of Table \ref{starttbl}, in which we recall that
odd factors were not written. It is easy but tedious to verify that everything except rows 0 and 1 will be as stated for $N_0$.
For example, the first $\bullet$ of Definition \ref{Ndef}(b) with its $s=0$, and $t$ replaced by $T$ becomes
$$P_{2^{\ell+1}+m-1,2^T+2^\ell-2+m}=2^{e-T}x^{2^\ell}p_{2^{T-\ell}-2}(x^{2^\ell})\text{ for }1\le m\le 2^\ell$$
for $\ell>0$.
With $\ell=s$ and $t=T-s$, this matches with
part ii of Theorem \ref{genthm} shifted 2 down and 1 to the right.

Part e of Definition \ref{Ndef} for Part 2 is somewhat delicate. We had $\eta_{e-1}(i)=0$ for $i=2$, 3, 5, 7, 11, 15,$\ldots$, i.e. $i=2^t-1$ or $3\cdot2^t-1$,
with $\eta_{e-1}$ increasing by 1's between these values of $i$. The rearranging done in Part 1 puts these 0's in $i=4$, 2, 10, 6, 22, 14,$\ldots$, i.e. $i=2^t-2$ or $3\cdot2^t-2$,
with $\eta$ again increasing by 1's between these values of $i$. Shifting these down by 2, as is done in Part 2, puts the 0's in $2^t$ and $3\cdot2^t$, starting with $i=4$, but we add 1 to the $\eta$ values
because of the shift of columns. For example, column 21 had $\nu\ge1+\eta$, but this now applies to column 22, where it is interpreted as $0+(\eta+1)$. The values of $\eta_0(2)$ and
$\eta_0(3)$ are 1 greater than $\eta_{e-1}(0)$ and $\eta_{e-1}(1)$, respectively. These values are all as claimed of $\eta_0(i)$ for $i\ge2$.

We kill the terms in $R_0$ and $R_1$ except
for those in columns of the form $2^t-1$ by the method of Step 1 of Section \ref{pfsec}. For example, if $j$ is of the form $3\cdot2^t-1$ or $5\cdot2^t-1$, $t\ge0$, then the
2-exponent in $R_0$ and $R_1$ is 1 greater than that in $R_{j+1}$, which is a leading entry.
We subtract multiples of $2R_{j+1}$ to kill the terms. This brings up multiples of $2\rt_{j+1}$. If this is nonzero in $C_k$, the term brought up can be killed by subtracting a
multiple
of $R_{k+1}$. This brings up multiples of $\rt_{k+1}$. Because columns 11--15, 23--31, etc., i.e. those $j$ satisfying $3\cdot2^t-1\le j\le4\cdot2^t-1$, are 0,
we will not bring up $\rt_i$ for $i$ from 12--16, 24--32, etc., and these are the only rows which contain entries which do not satisfy the proportionality and $x$-divisibility
conditions stated in d and e of Definition \ref{Ndef}, and the only rows that will have $\eta(i)<2$. Thus we will obtain $\eta_0(1)\ge2$,  and $\eta_0(0)\ge3$ since
$R_0$ has an extra factor of $x$ as compared to $R_1$.

Similar reasoning applies to columns $j$ not of the form $3\cdot2^t-1$ or $5\cdot2^t-1$. If also $j\ne 2^t-1$, then the 2-exponent in $R_0$ and $R_1$ will exceed that in
$R_{j+1}$ by more than 1. We can use an even multiple at one of the two steps of the previous paragraph, or can break it up into more steps, which will
make the rows eventually brought up have larger values of $i$, but, either way, we will not be bringing up the bad rows such as 12--16, etc., and so all the properties
will be transferred to $R_0$ and $R_1$. Changing the terms $-\binom{2^e}{2^t}$ in $C_{2^t-1}$ to $2^{e-t}$ is accomplished similarly, using that these differ by a multiple of
$2^{e-t+2}$,
while the entry in $(2^t,2^t-1)$ has 2-exponent $e-t+1$.

There are three steps to the reduction in Part 3, analogous to Steps 5, 6, and 7 in Section \ref{pfsec}. Note that the only nonzero entries of $N_s$ in $C_{3\cdot2^s-2}$ are
$2^{e-s-1}$ in $R_{3\cdot2^{s-1}-1}$, and $2^{e-s}$ in $R_{3\cdot2^s-1}$, and the second nonzero entry in $R_{3\cdot2^s-1}$ is $2^{e-s-2}x^{2^s}p_2(x^{2^s})$ in
$C_{3\cdot2^{s+1}-2}$.
The first step is to subtract $2R_{3\cdot2^{s-1}-1}$ from $R_{3\cdot2^s-1}$. If $\rt_{3\cdot2^{s-1}-1}$ has $q\ne0$ in $C_j$, then the $-2q$ brought into $R_{3\cdot2^s-1}$ can be
killed by adding $qR_{j+1}$. The net effect is to remove the leading entry of $R_{3\cdot2^s-1}$, making the $2^{e-s-2}x^{2^s}p_2(x^{2^s})$ in $C_{3\cdot2^{s+1}-2}$ its new leading
entry,
and to bring into this row various $q\rt_{j+1}$ for which $P_{3\cdot2^{s-1}-1,j}\ne0$. By (c), such $j$ must satisfy $j>2^{s+2}+1$, and then nonzero entries in $\rt_j$ only occur in
columns $>2^{s+3}+1$. This extends the first $\bullet$ in (c) to include also $k=s$, which is needed for $N_{s+1}$.

We must also consider the effect of these changes on $\eta(3\cdot2^s-1)$. We had $\eta_s(3\cdot2^s-1)=\eta_0(3\cdot2^s-1)=2^s$. It follows from (c)  that none of the $j$'s appearing
above
can satisfy $3\cdot2^t-1\le j\le 3\cdot2^t+2^s-3$ or $4\cdot2^{t}-1\le j\le4\cdot2^{t}+2^s-3$, $t\ge s$, which are the only values having $\eta_s(j+1)<2^s$. Thus $\eta(3\cdot2^s-1)$
does not change at this step.

The second step divides $R_{3\cdot2^s-1}$ by $x^{2^s}p_2(x^{2^s})$. This can be done because $\eta_s(3\cdot2^s-1)\ge2^s$.  The dividing changes $\eta(3\cdot2^s-1)$ to 0, which is consistent
with the claim for $\eta_{s+1}(3\cdot2^s-1)$. This step changes $P_{3\cdot2^s-1,2^u+2^{s+1}-2}$ from $2^{e-u}x^{2^s}p_{2^{u-s}-2}(x^{2^s})$ to $2^{e-u}p_{2^{u-s-1}-1}(x^{2^{s+1}})$
for $u\ge s+2$.  It removes the entry in the first $\bullet$ of (b) with $\ell=s$, $m=2^s$, $t=u$ and adds the final entry in the second $\bullet$ of (b) with $\ell=s+1$ and $t=u$.
Now $C_{3\cdot2^{s+1}-2}$ has
\begin{itemize}
\item $2^{e-s-2}$ in row $3\cdot2^s-1$;
\item $2^{e-s-2}p_3(x^{2^s})$ in row $2^{s+2}-1$;
\item multiples of $2^{e-s-2}$ in rows 0 through $2^{s+2}-1$;
\item a leading $2^{e-s-1}$ in row $3\cdot2^{s+1}-1$;
\item other entries 0.
\end{itemize}

Now we subtract multiples of row $3\cdot2^s-1$ from all other rows except row $3\cdot2^{s+1}-1$ to make them 0 in column $3\cdot2^{s+1}-2$. By property (d), this will zero all
entries in  column $2^u+2^{s+1}-2$, $u>s+2$, except in rows $3\cdot2^s-1$, $2^{s+2}-1$, and $2^u+2^{s+1}-1$. For $u> s+2$, the entry in $(2^{s+2}-1,2^u+2^{s+1}-2)$ is changed from
$2^{e-u}p_{2^{u-s}-1}(x^{2^s})$ to
\begin{eqnarray*}&&2^{e-u}(p_{2^{u-s}-1}(x^{2^s})-p_3(x^{2^s})p_{2^{u-s-1}-1}(x^{2^{s+1}}))\\
&=& -2^{e-u}x^{2^{s+1}}p_{2^{u-s-1}-2}(x^{2^{s+1}}).\end{eqnarray*}
The minus here can be changed to plus by modifying by a multiple of row $2^u+2^{s+1}-1$, which will not affect the properties such as (d) and (e).
Property (d) will now hold in $N_{s+1}$ for proportionality out of $C_{2^{s+3}+2^{s+1}-2}$, to the extent claimed there.
This change removes the entry of the second $\bullet$ of (b) with $\ell=s$, $m=2^s$,  and $t=u$ and replaces it by the entry of the first $\bullet$ with $\ell=s+1$, $m=0$, and
$t=u$.

Finally we consider the effect of this step on $x$-divisibility. If $j$ is as in (e) with $t>s+1$, and $i\le 2^{s+2}-1$ and $i\ne3\cdot2^s-1$, then
the new value of $P_{i,j}$ will equal
$$P_{i,j}^{\text{old}}-\frac{P_{i,3\cdot2^{s+1}-2}}{2^{e-s-2}}\cdot P_{3\cdot2^s-1,j}.$$
The old $P_{i,j}$ is divisible by $x^{3\cdot2^t-2-j+\eta_s(i)}$. Also, $P_{i,3\cdot2^{s+1}-2}$ is divisible by $x^{\eta_s(i)}$ if $i\le2^{s+2}-2$, and by $x^0$ if $i=2^{s+2}-1$.
(Note that (e) did not apply in this latter case due to the condition there which here would say $i\le j-2^{s+1}$.) We now have $P_{3\cdot2^{s}-1,j}$ divisible by
$x^{3\cdot2^t-2-j}$ since $\eta(3\cdot2^s-1)$ became 0 at the previous substep. Thus the $x$-divisibility of $P_{i,j}$ does not decrease except when $i=2^{s+2}-1$, where it changes
to 0,
consistent with $\eta_{s+1}(2^{s+2}-1)=0$.

\end{proof}

\section{An easily-checked proof for $e\le5$}\label{annsec}

In this section, we give an easily checked proof of Theorem \ref{annthm} for $e\le5$. Its discovery used the reduced form for $M_4$ described in Section \ref{descripsec},
and a {\tt Mathematica} calculation by Gonz\'alez for the $M_5$ analogue. However, checking its validity only requires elementary verifications.

It is proved in \cite[Proposition 4.1]{GVW} that Theorem \ref{annthm} would follow from showing that
\begin{equation}\label{nonz}2^{e-k}u^{3\cdot2^{k-1}-3}[0,0]\ne0\text{ in }M_e\text{ for }1\le k\le e.\end{equation}
For $e\le5$, (\ref{nonz}) is an immediate consequence of the following, which is the main result of this section.

\begin{thm}\label{phi} For $e\ge1$ and  $1\le k\le \min(e,5)$, there is a homomorphism $\phi_{k,e}:M_e\to\Z/2^{k+e-1}$ sending $2^{e-k}u^{3\cdot2^{k-1}-3}[0,0]$ nontrivially.\end{thm}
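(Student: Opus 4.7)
The plan is to construct each $\phi_{k,e}$ as a $\Z$-linear map on the single graded piece $G_d\subset M_e$ containing the target class (and extend by zero on other gradings), where $d=3\cdot 2^{k-1}-3$. Concretely, one chooses an integer-valued function $f=f_{k,e}$ on the monomial generators $\{u^\ell[i,j]:i+j+\ell=d,\ i,j,\ell\ge 0\}$ of $G_d$ and verifies that its reduction modulo $2^{k+e-1}$ kills all the relations (\ref{reln}). Multiplying each relation by the unique power of $u$ that brings it into $G_d$, the conditions read
$$\sum_{m=0}^{i}\binom{2^e}{m+1}\,f\bigl(u^{m+\ell}[i-m,j]\bigr)\equiv 0\pmod{2^{k+e-1}}\qquad (i+j+\ell=d,\ i,j,\ell\ge 0),$$
together with the analogous family obtained by swapping $i$ and $j$. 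Since Kummer's theorem gives $\nu\bigl(\binom{2^e}{m+1}\bigr)=e-\nu(m+1)$, each such congruence is automatically trivial unless $\nu(m+1)$ is small, so for any candidate $f$ the verification reduces to a finite list of binomial-coefficient identities whose validity is uniform in $e$.

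For $k=1$ one takes $f_{1,e}([0,0])=1$; the only relation in $G_0$ is $2^e[0,0]=0$, and $\phi_{1,e}(2^{e-1}[0,0])=2^{e-1}\ne 0\in\Z/2^e$. For $k\ge 2$ the values of $f_{k,e}$ are to be read off from the reduced presentation matrix of Sections \ref{descripsec}--\ref{gensec}. By Theorem \ref{genthm}, the diagonal entry $2^{e-k}$ first appears in the reduced matrix at position $(3\cdot 2^{k-1}-2,\ 3\cdot 2^{k-1}-2)$, and the row of coefficients expressing this particular generator of $G_d$ in terms of all the generators during the Hermite-like reduction provides precisely the prescription for $f_{k,e}$. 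For $k\in\{2,3,4\}$ this row can be extracted directly from Table \ref{init}, while for $k=5$ it is obtained from Gonz\'alez's \texttt{Mathematica} reduction of $M_5$ acknowledged at the start of the section. The non-triviality of $\phi_{k,e}(2^{e-k}u^d[0,0])$ is equivalent to $\nu(f_{k,e}(u^d[0,0]))=k-1$, since then $\nu(2^{e-k}f_{k,e}(u^d[0,0]))=e-1<k+e-1$.

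The main obstacle is exhibiting the explicit formula for $f_{k,e}$ uniformly in $e$: although the list of congruences to be checked is short for any fixed $k$, the pattern of values $f_{k,e}(u^\ell[i,j])$ grows combinatorially heavier with $k$, and the hypothesis $k\le 5$ in the theorem precisely delimits the range in which an $e$-stable pattern has been identified (computer assistance being needed already at $k=5$). Once $f_{k,e}$ is written down, the verification of the boxed congruences and of the valuation condition on $f_{k,e}(u^d[0,0])$ is elementary: each congruence collapses, via $\nu\bigl(\binom{2^e}{m+1}\bigr)=e-\nu(m+1)$, to a check involving only a bounded number of explicit integers, which is what the section advertises as an ``easily checked'' verification.
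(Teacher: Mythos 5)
Your overall blueprint matches the paper's: construct an integer-valued function $\phi_k$ on the single graded piece $G_d$ with $d=3\cdot2^{k-1}-3$, check that all relations from (\ref{reln}) map into $2^{k+e-1}\Z$, and verify that $2^{e-k}u^d[0,0]$ maps to something nonzero mod $2^{k+e-1}$.  However, there are two substantive gaps and one small error.

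The central gap is the claim that, because $\nu\binom{2^e}{m+1}=e-\nu(m+1)$, ``the verification reduces to a finite list of binomial-coefficient identities whose validity is uniform in $e$.''  This is precisely the nontrivial part, and the paper does \emph{not} get uniformity for free.  The congruences involve the odd parts of $\binom{2^e}{m+1}$, not just their $2$-adic valuations, and these odd parts change with $e$.  The paper handles $e>k$ by induction on $e$, using the key estimate (\ref{indn}), $\nu\bigl(\binom{2^{e+1}}{\ell}-2\binom{2^e}{\ell}\bigr)=2e+1-[\log_2(\ell-1)]-\nu(\ell)$, and the inductive step only closes because of a divisibility property (\ref{logl}), $\nu(\phi_4(i-\ell,j))\ge[\log_2(\ell)]+\nu(\ell+1)-1$, that is \emph{specific to the chosen values} of $\phi_k$.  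So the statement that ``for any candidate $f$'' the verification is uniform is false; one needs both the explicit $f$ and the estimate about how its values are $2$-adically distributed.  Your proposal doesn't supply either, and so doesn't actually reach $e>k$.

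The second gap is that the function $f_{k,e}$ is never exhibited; you defer to ``read off the reduced presentation matrix'' the row of coefficients expressing $u^d[0,0]$.  The row-reduced (Hermite-like) form does not directly hand you a homomorphism to a cyclic quotient — for that you would need a Smith-normal-form change of basis on the generators as well — and in any case the content of the section is precisely writing the values down so that the congruences (and the inductive step) become a finite, checkable computation.  Finally, a small slip: non-triviality of $\phi_{k,e}(2^{e-k}u^d[0,0])$ in $\Z/2^{k+e-1}$ is equivalent to $\nu(\phi_k(u^d[0,0]))\le 2k-2$, not to $\nu=k-1$; the paper's choice has $\phi_k(u^d[0,0])=2^{2k-2}$, hitting the bound exactly.
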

The homomorphism $\phi_{k,e}$ is nonzero only on the component of $M_e$ in grading $2(3\cdot2^{k-1}-3)$. The component of $M_e$ in  grading $2d$ is generated by the same monomials
$u^{d-i-j}[i,j]$ for any $e$, but the relations depend on $e$.
We will give an explicit formula for $\phi_{k,e}(u^{3\cdot2^{k-1}-3-i-j}[i,j])\in\Z$ for $i,j\ge0$, which is independent of $e$.
Thus we usually call it just $\phi_k$. We will prove that $\phi_k$ applied to a relation (\ref{reln}) in $M_e$ is divisible by $2^{k+e-1}$.
Since part of our formula is $\phi_k(u^{3\cdot2^{k-1}-3}[0,0])=2^{2k-2}$ and hence
$$\phi_k(2^{e-k}u^{3\cdot2^{k-1}-3}[0,0])=2^{k+e-2}\ne0\in\Z/2^{k+e-1},$$ Theorem \ref{phi} will follow.
 The hope was to see a pattern in the formulas for $\phi_k$ that might extend to all $k$,
but they seem a bit too delicate for that.

Since the exponent of $u$ in $u^{3\cdot2^{k-1}-3-i-j}[i,j]$ is determined by $k$, $i$, and $j$, we do not list it. We write $\phi_k(i,j)$ for $\phi_k(u^{3\cdot2^{k-1}-3-i-j}[i,j])$,
and will sometimes omit the subscript $k$.
We have $\phi_1(0,0)=1$, and the only relation in grading 0 in $M_e$ is $2^e[0,0]$, which handles the case $k=1$.

Here are the lists of values of $\phi_k(i,j) $ when $k=2$ and $k=3$.
$$[4\ |\ 0,0\ |\ 2,2,2\ |\ 0,1,1,0],$$
\begin{gather*}[16\ |\ 0,0\ |\ 0,0,0\ |\ 0,0,0,0\ |\ 8,0,8,0,8\ |\ 0,8,0,0,8,0\ |\ 0,0,4,0,4,0,0\ |\\
0,4,4,4,4,4,4,0\ |\ 0,0,6,6,4,6,6,0,0\ |\ 0,0,0,-1,-1,-1,-1,0,0,0]\end{gather*}
Our functions always satisfy $\phi(i,j)=\phi(j,i)$. The first line says that
the nonzero values of $\phi_2$ are $\phi_2(0,0)=4$, $\phi_2(2,0)=\phi_2(1,1)=2$, and $\phi_2(2,1)=1$, and their flips.
The next pair  of lines says, for example, that $\phi_3(0,0)=16$ and
$$\phi_3(i,6-i)=\begin{cases}4&i=2,4\\ 0&i=0,1,3,5,6.\end{cases}$$

Before we list the formulas for $\phi_4$ and $\phi_5$, we discuss the verification that $\phi_{3,e}:M_e\to\Z/2^{e+2}$ is well-defined for all $e\ge3$.
 This one is simple enough that it can be (and was) done by hand. We first consider the case $e=3$.
The coefficients $\binom81,\ldots\binom88$  in (\ref{reln}) are of the form $8,4\a,8\a',2\a'',8\b,4\b',8,1$,
where the $\a$'s are 3 mod 4, and the $\b$'s odd. There are 55 relations after symmetry is taken into account, but only 13 of them contain any
term for which $\nu(\binom8{\ell+1}\phi(i-\ell,j))<5$.  The most delicate is the case $i=5$, $j=4$, in which we have
\begin{eqnarray*}&&8\phi(5,4)+4\a\phi(4,4)+8\a'\phi(3,4)+2\a''\phi(2,4)+8\b\phi(1,4)+4\b'\phi(0,4)\\
&=&8\cdot1-4\a\cdot4+8\a'\cdot4-2\a''\cdot4+8\b\cdot8+4\b'\cdot8\\
&\equiv&8+16+0-24+0+0\equiv0\pmod{32}.\end{eqnarray*}

If $e>3$, then it is as if the binomial coefficients are multiplied by $2^{e-3}$. Their odd factors change, but where it matters, the odd factors are still 3 mod 4.
So $\phi$ applied to each relation is divisible by $2^{e-3}\cdot32$.
Terms with $\binom{2^e}9$ and $\binom{2^e}{10}$ also appear, but they are multiplied by $\phi(0,0)$ or $\phi(0,1)$, and so yield multiples of $2^{e+2}$.
This establishes the well-definedness of $\phi_{3,e}$, and that of $\phi_{2,e}$ is much easier.

Next we list values of $\phi_4(i,j)$ in rows of fixed $i+j$ for which there are some nonzero values. We precede the row by the value of $i+j$. For example, the third listed row
says that
$$\phi_4(i,10-i)=\begin{cases}32&i=2,8\\ 0&i=0,1,3,4,5,6,7,9,10.\end{cases}$$
\begin{gather*}0:64\\
8:32,0,0,0,32,0,0,0,32\\
10:0,0,32,0,0,0,0,0,32,0,0\\
12:0,0,0,0,16,0,0,0,16,0,0,0,0\\
14:0,0,16,0,16,0,16,0,16,0,16,0,16,0,0\\
15:0,0,0,0,0,16,16,0,0,16,16,0,0,0,0,0\\
16:0,0,0,0,8,0,8,0,16,0,8,0,8,0,0,0,0\\
17:0,16,0,16,16,8,0,16,-8,-8,16,0,8,16,16,0,16,0\\
18:0,0,0,0,8,8,4,8,-4,0,-4,8,4,8,8,0,0,0,0\\
19:0,8,8,0,0,-4,-4,-4,4,8,8,4,-4,-4,-4,0,0,8,8,0\\
20:0,0,-4,-4,-4,0,-6,-6,-4,2,4,2,-4,-6,-6,0,-4,-4,-4,0,0\\
21:0,0,0,6,6,6,0,3,3,1,-1,-1,1,3,3,0,6,6,6,0,0,0.
\end{gather*}
These numbers were discovered using Table \ref{init}. Because of the way that they were obtained, it better be the
case that they send all relations to 0, at least when $e=4$. The beauty is that despite the hard work that went into obtaining them, once we
have them, it is a simple computer check to verify that they work. It is just a matter of reading these numbers $\phi_4(i,j)$ into the computer and then having the
computer check that
$$\sum_{\ell=0}^i\tbinom{16}{\ell+1}\phi_4(i-\ell,j)\equiv0\pmod {128}\text{ for }0\le i\le 21,\ 0\le j\le 21-i.$$

Now we can prove by induction on $e$ that if $e>4$, then
$$\sum_{\ell=0}^i\tbinom{2^e}{\ell+1}\phi_4(i-\ell,j)\equiv0\pmod {2^{e+3}}\text{ for }0\le i\le 21,\ 0\le j\le 21-i.$$
It is easy to prove that, for $1<\ell<2^{e+1}$,
\begin{equation}\label{indn}\nu(\tbinom{2^{e+1}}\ell-2\tbinom{2^e}\ell)=2e+1-[\log_2(\ell-1)]-\nu(\ell).\end{equation}
The induction argument follows  from this and the values of $\phi_4(-)$ listed above. Indeed, the induction step requires
\begin{equation}\label{logl}\nu(\phi_4(i-\ell,j))\ge[\log_2(\ell)]+\nu(\ell+1)-1,\end{equation}
and since $i+j\le21$, we have $\nu(\phi_4(i-\ell,j))\ge1$, 2,  3, 4, 5, 6 if $\ell\ge1$, 2, 4,  6, 10, 14, respectively, from which (\ref{logl}) follows.

Our treatment for $\phi_5$ is similar. Because of the longer lists, we take advantage of symmetry, and only list $\phi_5(i,j)$ for $i\le j$.
As before, we list values of $\phi_5(i,j)$ in rows of fixed $i+j$ for which there are some nonzero values. We precede the row by the value of $i+j$.
If $i+j=2t+1$ (resp.~$2t$), the last entry listed is $\phi_5(t,t+1)$ (resp.~$\phi_5(t,t)$).
\begin{eqnarray*}0&:&256\\
16&:&128,0,0,0,0,0,0,0,128\\
20&:&0,0,0,0,128,0,0,0,0,0,0\\
24&:& 0, 0, 0, 0, 0, 0, 0, 0, 64, 0, 0, 0, 0\\
28&:& 0,0,0,0,64,0,0,0,64,0,0,0,64,0,0\\
30&:& 0,0,0,0,0,0,0,0,0,0,64,0,64,0,0,0\\
32&:&0, 0, 0, 0, 0, 0, 0, 0, 32, 0, 0, 0, 32, 0, 0, 0,  64\\
33&:&0, 0, 0, 0, 0, 0, 0, 0, 0, 64, 0, 0, 0, 0, 0, 0, 64\\
34&:&0, 0, 64, 0, 0, 0, 64, 0, 64, 0, -32, 0, 0, 0, 64, 0, 32, 0\\
35&:&0, 0, 0, 0, 0, 0, 0, 0, 0, 64, 64, 64, 0, 64, 64, 0,  64, 0\\
36&:&0, 0, 0, 0, 0, 0, 0, 0, 32, 0, 32, 0, 16, 0, 32, 0, -16,  0, 0\\
37&:&0, 0, 0, 0, 0, 0, 0, 0, 0, 32, 0, 0, 32, 32, 0, 0, 32,  0, 0\\
38&:&0, 0, 32, 0, 32, 0, 0, 0, 0, 0, 16, 0, -16, 32, -16, 0,  16, 32, 0, 0\\
39&:&0, 0, 0, 0, 0, 32, 32, 0, 0, 0, 0, 0, 32, 16, 16, 0, 0,  16, 16, 32\\
40&:&0, 0, 0, 0, 16, 0, 16, 0, 16, 0, 32, 32, 24, 0, 56, 32, 16,  32, 56, 32, 48\\
41&:&0, 32, 0, 32, 32, 48, 0, 0, 16, 48, 0, 48, 48, 56, 0, 16, 8,  24, 16, 16, 8\\
42&:&0, 0, 0, 0, 16, 16, 8, 16, 8, 16, 40, 0, 40, 8, 36, 8, 28,  16, 52, 8, 28, 48\\
43&:&0, 16, 16, 0, 0, 8, 8, 24, 8, 8, 8, 24, 0, 28, 28, 12, 4,  24, 8, 12, 12, 8\\
44&:&0, 0, 24, 24, 24, 0, 28, 28, 16, 4, 28, 24, 0, 0, 18, 2, 4,  26, 28, 2, 20, 2, 20\\
45&:&0, 0, 0, 12, 12, 12, 0, 10, 10, 14, 14, 4, 8, 10, 0, 15, 15,  5, 3, 15, 9, 1, 15
\end{eqnarray*}

The computer checks that
$$\sum_{\ell=0}^i\tbinom{2^e}{\ell+1}\phi_5(i-\ell,j)\equiv0\pmod {2^{e+4}}\text{ for }0\le i\le 45,\ 0\le j\le 45-i$$
is true for $e=5$. It is then proved for all $e\ge5$ by induction, using (\ref{indn}) as in the previous case.

\def\line{\rule{.6in}{.6pt}}


\begin{thebibliography}{99}
\bibitem{Ast} L.~Astey, {\em Geometric dimension of bundles over real projective spaces}, Quart.~J.~Math.~Oxford {\bf 31} (1980) 139--155.
\bibitem{Far} M.~Farber, {\em Topological complexity of motion planning}, Discrete Comput.~Geom. {\bf 29} (2003) 211--221.
\bibitem{Gon1} J.~Gonz\'alez, {\em Connective $K$-theoretic Euler classes and nonimmersions of $2^k$-lens spaces}, J.~London
Math.~Soc. {\bf 63} (2001) 247--256.
\bibitem{Gon2} \line, {\em Topological robotics in lens spaces}, Math.~Proc.~Cambridge Philos.~Soc. {\bf 139} (2005) 469--485.
\bibitem{GVW} J.~Gonz\'alez, M.~Velasco, and W.S.~Wilson, {\em Biequivariant maps of spheres and topological complexity of lens spaces}, Comm.~Contemp.~Math {\bf 15} (2013) 33
    pages.
\bibitem{GZ} J.~Gonz\'alez  and L.~Z\'arate, {\em BP-theoretic instabilities to the motion-planning problem in 4-torsion lens spaces}, Osaka J.~Math {\bf 43} (2006) 581--596.
    \end{thebibliography}
\end{document}